\def\bsquareforqed{\rule{0.6em}{0.6em}}
\def\bqed{\ifmmode\bsquareforqed\else{\unskip\nobreak\hfil
\penalty50\hskip1em\null\nobreak\hfil\bsquareforqed
\parfillskip=0pt\finalhyphendemerits=0\endgraf}\fi}
\spnewtheorem{observation}{Observation}{\bfseries}{\itshape}
\spnewtheorem{clm}{Claim}{\bfseries}{\rmfamily}
\newcommand{\rlzr}{\mathcal{R}}
\newcommand{\comp}[1]{\overline{#1}}  
\newcommand{\boxi}{\mbox{\textnormal{box}}}
\newcommand{\prob}{\mbox{\textnormal{Prob}}}
\newcommand{\tw}{\hspace{0.5mm}\mbox{\textnormal{tree-width}\hspace{.5mm}}}
\newcommand{\mvc}{\hspace{0.5mm}\mbox{\textnormal{MVC}\hspace{.5mm}}}
\newcommand{\ceil}[1]{\left\lceil #1 \right\rceil}
\newcommand{\floor}[1]{\left\lfloor #1 \right\rfloor}
\newcommand{\poset}{\mathcal{P}}
\newcommand{\igr}{\mathcal{I}}
\begin{document}
\title{Boxicity and Poset Dimension}
\author{Abhijin Adiga\inst{1}, Diptendu Bhowmick\inst{1}, L. Sunil
Chandran\inst{1}}
\institute{Department of Computer Science and Automation, Indian Institute
of Science, Bangalore--560012, India. \\emails: abhijin@csa.iisc.ernet.in, diptendubhowmick@gmail.com, sunil@csa.iisc.ernet.in}
\date{}
\maketitle
{\renewcommand{\thefootnote}{}
\footnote{This work was supported by DST grant SR/S3/EECE/62/2006
and Infosys Fellowship.}}
\begin{abstract}
Let $G$ be a simple, undirected, finite graph with vertex set $V(G)$ and
edge set $E(G)$. A $k$-dimensional
box is a Cartesian product of closed intervals $[a_1,b_1]\times
[a_2,b_2]\times\cdots\times [a_k,b_k]$.  The {\it boxicity} of $G$,
$\boxi(G)$ is the minimum integer $k$ such that $G$ can be represented
as the intersection graph of $k$-dimensional boxes, i.e. each vertex
is mapped to a $k$-dimensional box and two vertices are adjacent in $G$
if and only if their corresponding boxes intersect. Let $\poset=(S,P)$
be a poset where $S$ is the ground set and $P$ is a reflexive,
anti-symmetric and transitive binary relation on $S$. The dimension
of $\poset$, $\dim(\poset)$ is the minimum integer $t$ such that $P$
can be expressed as the intersection of $t$ total orders.

Let $G_\poset$ be the \emph{underlying comparability graph} of $\poset$,
i.e. $S$ is the vertex set and two vertices are adjacent if and only if
they are comparable in $\poset$. It is a well-known fact that posets
with the same underlying comparability graph have the same dimension.
The first result of this paper links the dimension of a poset to the
boxicity of its underlying comparability graph. In particular, we
show that for any poset $\poset$, $\boxi(G_\poset)/(\chi(G_\poset)-1)
\le \dim(\poset)\le 2\boxi(G_\poset)$, where $\chi(G_\poset)$ is the
chromatic number of $G_\poset$ and $\chi(G_\poset)\ne1$. It immediately
follows that if $\poset$ is a height-$2$ poset, then $\boxi(G_\poset)\le
\dim(\poset)\le 2\boxi(G_\poset)$ since the underlying comparability
graph of a height-$2$ poset is a bipartite graph.

The second result of the paper relates the boxicity of a graph $G$ with a
natural partial order associated with the \emph{extended double cover} of
$G$, denoted as $G_c$: Note that $G_c$ is a bipartite graph with partite
sets $A$ and $B$ which are copies of $V(G)$ such that corresponding
to every $u\in V(G)$, there are two vertices $u_A\in A$ and $u_B\in B$
and $\{u_A,v_B\}$ is an edge in $G_c$ if and only if either $u=v$ or $u$
is adjacent to $v$ in $G$. Let $\poset_c$ be the natural height-$2$ poset
associated with $G_c$ by making $A$ the set of minimal elements and
$B$ the set of maximal elements. We show that $\frac{\boxi(G)}{2}
\le \dim(\poset_c) \le 2\boxi(G)+4$.

These results have some immediate and significant consequences.
The upper bound $\dim(\poset)\le 2\boxi(G_\poset)$ allows us to derive
hitherto unknown upper bounds for poset dimension such as
$\dim(\poset)\le 2\tw(G_\poset)+4$, since boxicity of any graph is known to
be at most its $\tw+2$. In the other direction, using the
already known bounds for partial order dimension
we get the following: (1) The boxicity of any graph
with maximum degree $\Delta$ is $O(\Delta\log^2\Delta)$ which is an
improvement over the best known upper bound of $\Delta^2+2$. (2)
There exist graphs with boxicity $\Omega(\Delta\log\Delta)$. This
disproves a conjecture that the boxicity of a graph is $O(\Delta)$. (3)
There exists no polynomial-time algorithm to approximate the boxicity of
a bipartite graph on $n$ vertices with a factor of $O(n^{0.5-\epsilon})$
for any $\epsilon>0$, unless $NP=ZPP$.
\end{abstract}
\textbf{Keywords:} Boxicity, partial order, poset dimension, comparability
graph, extended double cover.

\section{Introduction}\label{sec:intro}
\subsection{Boxicity}
A $k$-box is a Cartesian product of closed intervals $[a_1,b_1]\times
[a_2,b_2]\times\cdots\times [a_k,b_k]$.  A $k$-box representation
of a graph $G$ is a mapping of the vertices of $G$ to $k$-boxes in
the $k$-dimensional Euclidean space such that two vertices in $G$ are
adjacent if and only if their corresponding $k$-boxes have a non-empty
intersection. The \emph{boxicity} of a graph denoted $\boxi(G)$, is the
minimum integer $k$ such that $G$ has a $k$-box representation.  Boxicity
was introduced by Roberts \cite{recentProgressesInCombRoberts}. Cozzens
\cite{phdThesisCozzens} showed that computing the boxicity of
a graph is NP-hard. This was later strengthened by Yannakakis
\cite{complexityPartialOrderDimnYannakakis} and finally by Kratochv\`{\i}l
\cite{specialPlanarSatisfiabilityProbNPKratochvil} who showed that
determining whether boxicity of a graph is at most two itself is
NP-complete.

It is easy to see that a graph has boxicity at most 1 if and only
if it is an \emph{interval graph}, i.e. each vertex of the graph
can be associated with a closed interval on the real line such that
two intervals intersect if and only if the corresponding vertices are
adjacent. By definition, boxicity of a complete graph is $0$.  Let $G$
be any graph and $G_i$, $1\le i\le k$ be graphs on the same vertex set
as $G$ such that $E(G)=E(G_1)\cap E(G_2)\cap\cdots\cap E(G_k)$. Then we
say that $G$ is the \emph{intersection graph} of $G_i$ s for $1\le i\le
k$ and denote it as $G=\bigcap_{i=1}^{k}G_i$. Boxicity can be stated in
terms of intersection of interval graphs as follows:
\begin{lemma}{\textnormal{Roberts
\cite{recentProgressesInCombRoberts}:}}\label{lem:intbox} 
The boxicity of a non-complete graph $G$ is the minimum positive integer
$b$ such that $G$ can be represented as the intersection of $b$ interval
graphs. Moreover, if $G=\bigcap_{i=1}^{m}G_i$ for some graphs $G_i$
then $\boxi(G)\leq\sum_{i=1}^{m}\boxi(G_i)$.
\end{lemma}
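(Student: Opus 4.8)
The plan is to prove that the two characterizations of boxicity coincide by passing between $b$-box representations and families of $b$ interval representations one coordinate at a time. First I would record the elementary geometric fact that two axis-parallel boxes in $\reals^k$ have a non-empty intersection if and only if their projections onto each of the $k$ coordinate axes intersect. This factorization of box-intersection into coordinate-wise interval-intersection is the engine behind the whole equivalence.

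For the first statement, suppose $G$ has a $b$-box representation assigning to each vertex $v$ a box $[a_1^v,b_1^v]\times\cdots\times[a_b^v,b_b^v]$. For each coordinate $i$ I would define an interval graph $G_i$ on $V(G)$ by mapping $v$ to the interval $[a_i^v,b_i^v]$. By the projection fact, two vertices are adjacent in $G$ (their boxes meet) if and only if their intervals meet in every coordinate, i.e. they are adjacent in every $G_i$; hence $G=\bigcap_{i=1}^{b}G_i$, exhibiting $G$ as an intersection of $b$ interval graphs. Conversely, if $G=\bigcap_{i=1}^{b}G_i$ with each $G_i$ an interval graph, I would fix an interval representation of each $G_i$ and assign to each vertex $v$ the box whose $i$th coordinate interval is the one given by the representation of $G_i$; the same projection fact shows these boxes reproduce exactly the adjacencies of $G$. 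Thus the minimum $b$ taken over box representations equals the minimum $b$ taken over interval-graph intersection representations, and since $G$ is non-complete at least one interval graph is required, so this minimum is a positive integer.

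For the \textbf{moreover} part, assume $G=\bigcap_{i=1}^{m}G_i$. For each $i$ with $\boxi(G_i)\ge 1$, the first statement lets me write $G_i$ as an intersection of $\boxi(G_i)$ interval graphs, while an index $i$ with $\boxi(G_i)=0$ corresponds to a complete graph $G_i$, which contributes every possible edge and can therefore be deleted from the intersection without changing $G$. Merging all these families of interval graphs over $i=1,\dots,m$ expresses $G$ as an intersection of $\sum_{i=1}^{m}\boxi(G_i)$ interval graphs, and the first statement then yields $\boxi(G)\le\sum_{i=1}^{m}\boxi(G_i)$.

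I expect the only delicate points to be bookkeeping rather than substance: checking that intersection of graphs on a common vertex set is associative so that the nested decompositions can be flattened into a single family, and handling the complete factors $G_i$ (those of boxicity $0$) correctly so that the final count is exactly $\sum_{i=1}^{m}\boxi(G_i)$.
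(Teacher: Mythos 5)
Your proof is correct. The paper states this lemma without proof, citing Roberts directly, so there is no in-paper argument to compare against; your coordinate-projection argument (boxes intersect if and only if all their axis projections intersect, giving the two-way translation between $b$-box representations and intersections of $b$ interval graphs, plus discarding the complete factors of boxicity $0$ in the \emph{moreover} part) is the standard proof of this classical fact and handles the relevant edge cases properly.
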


Roberts, in
his seminal work \cite{recentProgressesInCombRoberts} proved
that the boxicity of a complete $k$-partite graph is $k$.
Chandran and Sivadasan \cite{boxicityTreewidthSunilNaveen}
showed that $\boxi(G)\le\tw(G)+2$. Chandran, Francis and
Sivadasan \cite{boxicityMaxDegreeSunilNaveenFrancis} proved that
$\boxi(G)\le\chi(G^2)$ where, $\chi(G^2)$ is the chromatic number of
$G^2$. In \cite{boxicityGraphsBoundedDegreeEsperet} Esperet proved
that $\boxi(G)\le \Delta^2(G)+2$, where $\Delta(G)$ is the maximum
degree of $G$. Scheinerman \cite{phdThesisScheinerman} showed
that the boxicity of outer planar graphs is at most 2. Thomassen
\cite{intervalRepPlanarGraphsThomassen} proved that the boxicity of
planar graphs is at most 3. In \cite{computingBoxicityCozzensRoberts},
Cozzens and Roberts studied the boxicity of split graphs. 

\subsection{Poset Dimension}
A \emph{partially ordered set} or \emph{poset} $\poset=(S,P)$
consists of a non empty set $S$, called the \emph{ground set}
and a reflexive, antisymmetric and transitive binary relation $P$
on $S$. A \emph{total order} is a partial order in which every two
elements are comparable. It essentially corresponds to a permutation
of elements of $S$. A \emph{height-$2$ poset} is one in which
every element is either a minimal element or a maximal element. A
\emph{linear extension} $L$ of a partial order $P$ is a total order
which satisfies $(x\le y \textrm{ in } P\Rightarrow x\le y \textrm{ in
} L)$.  A \emph{realizer} of a poset $\poset=(S,P)$ is a set of linear
extensions of $P$, say $\rlzr$ which satisfy the following condition:
for any two distinct elements $x$ and $y$, $x<y$ in $P$ if and only
if $x<y$ in $L$, $\forall L\in\rlzr$.  The \emph{poset dimension}
of $\poset$ (sometimes abbreviated as dimension of $\poset$) denoted
by $\dim(\poset)$ is the minimum integer $k$ such that there exists
a realizer of $P$ of cardinality $k$. Poset dimension was introduced
by Dushnik and Miller \cite{partiallyOrderedSetsDushnik}. Clearly,
a poset is one-dimensional if and only if it is a total
order. Pnueli et al.  \cite{transOrientGraphsPnueliEtal} gave
a polynomial time algorithm to recognize dimension 2 posets. In
\cite{complexityPartialOrderDimnYannakakis} Yannakakis showed that it
is NP-complete to decide whether the dimension of a poset is at most
3. For more references and survey on dimension theory of posets see
Trotter \cite{combPosetsDimensionTrotter,partiallyOrderedSetsTrotter}.
Recently, Hegde and Jain \cite{hardnessPODHegdeJain} showed that it is
hard to design an approximation algorithm for computing the dimension
of a poset.

A simple undirected graph $G$ is a comparability graph if and only
if there exists some poset $\poset=(S,P)$, such that $S$ is the
vertex set of $G$ and two vertices are adjacent in $G$ if and only
if they are comparable in $\poset$. We will call such a poset an
\emph{associated poset} of $G$. Likewise, we refer to $G$ as the
\emph{underlying comparability graph} of $\poset$. Note that for a
height-$2$ poset, the underlying comparability graph is a bipartite
graph with partite sets $A$ and $B$, with say $A$ corresponding to
minimal elements and $B$ to maximal elements. For more on comparability
graphs see \cite{algGraphTheoryPerfectGraphsGolumbic}. It is
easy to see that there is a unique comparability graph associated
with a poset, whereas, there can be several posets with the same
underlying comparability graph. However, Trotter, Moore and Sumner
\cite{dimnComparabilityGraphTrotterMooreSumner} proved that posets with
the same underlying comparability graph have the same dimension.

\section{Our Main Results}\label{sec:mainResults}
The results of this paper are the consequence of our attempts to bring out
some connections between boxicity and poset dimension. As early as 1982,
Yannakakis had some intuition regarding a possible connection between
these problems when he established the NP-completeness of both poset
dimension and boxicity in \cite{complexityPartialOrderDimnYannakakis}. But interestingly, no results were discovered in
the last 25 years which establish links between these
two notions. Perhaps the researchers were misled by some deceptive examples
such as the following one: Consider a complete graph $K_n$ where $n$ is
even and remove a perfect matching from it. The resulting graph is a
comparability graph and the dimension of any of its associated posets is 2,
while its boxicity is $n/2$. 
In this context it may be worth recalling a result from
\cite{geometricContainmentOrdersFishburnTrotter} which relates the poset
dimension to another parameter namely the dimension of box orders. A poset
$\poset=(S,P)$ is said to be a box order in $m$ dimensions if there exists
a mapping of its elements to $m$-dimensional axis-parallel boxes such
that $x<y$ in $P$ if and only if the box of $y$ strictly contains the
box of $x$. Box order is a particular type of geometrical containment
order (See \cite{geometricContainmentOrdersFishburnTrotter,combPosetsDimensionTrotter}). The
result is as follows: the dimension of $\poset$ is at
most $2m$ if and only if it is a box order in $m$ dimensions
\cite{containmentIntersectionGolumbic,containmentGraphsPosetsGolumbicScheinerman}.
But note that boxicity is fundamentally different from box orders. As in
the case of the above example,
we can demonstrate families of posets of
constant dimension whose underlying comparability graphs have arbitrarily
high boxicity, which is in contrast with the above result on box orders.


First we state an upper bound and a lower bound for the dimension of a
poset in terms of the boxicity of its underlying comparability graph.
\begin{theorem}\label{thm:dimUpperBound}
Let $\poset=(V,P)$ be a poset such that $\dim(\poset)>1$ and $G_\poset$
its underlying comparability graph. Then,
$\dim(\poset) \le 2\boxi(G_\poset).$
\end{theorem}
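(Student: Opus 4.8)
The plan is to start from a box representation of $G_\poset$ that realizes its boxicity and manufacture from it a realizer of $\poset$ of size exactly twice the boxicity. By Lemma~\ref{lem:intbox}, writing $b=\boxi(G_\poset)$, the graph $G_\poset$ is the intersection of $b$ interval graphs; equivalently each vertex $v$ is assigned a box $\prod_{j=1}^{b}[l_j(v),r_j(v)]$ so that $u,v$ are adjacent in $G_\poset$ (i.e.\ comparable in $\poset$) if and only if their boxes intersect, which happens precisely when, for every coordinate $j$, the intervals $[l_j(u),r_j(u)]$ and $[l_j(v),r_j(v)]$ overlap. Consequently a non-edge $\{x,y\}$ of $G_\poset$ (an incomparable pair of $\poset$) is \emph{separated} in at least one coordinate $j$, meaning one of the two intervals lies strictly to the left of the other. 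For each coordinate $j$ I would introduce the interval order $Q_j$ defined by $x <_{Q_j} y \iff r_j(x) < l_j(y)$; the crucial compatibility observation is that pairs comparable in $\poset$ are edges (hence they overlap in every coordinate and are incomparable in every $Q_j$), while pairs comparable in some $Q_j$ are separated non-edges (hence incomparable in $\poset$). Thus $P$ and each $Q_j$ order disjoint sets of pairs.

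The realizer will consist of $2b$ linear extensions of $P$, two per coordinate. For coordinate $j$ I would take $L_j$ to be any linear extension of the relation $P\cup Q_j$ and $L_j'$ to be any linear extension of $P\cup Q_j^{\,-1}$, where $Q_j^{\,-1}$ reverses $Q_j$. Both relations contain $P$, so any total order extending them is automatically a linear extension of $\poset$. The one substantive point, and the step I expect to be the main obstacle, is to show that $P\cup Q_j$ (and symmetrically $P\cup Q_j^{\,-1}$) is acyclic, so that such linear extensions exist. I would prove this by a monotonicity argument: since both $P$ and $Q_j$ are transitive, a shortest directed cycle may be assumed to alternate $P$-edges and $Q_j$-edges, say $a_1<_P b_1<_{Q_j}a_2<_P b_2<_{Q_j}\cdots<_P b_m<_{Q_j}a_1$. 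A $Q_j$-edge $b_i\to a_{i+1}$ gives $r_j(b_i)<l_j(a_{i+1})$, while the overlap forced by the $P$-edge $a_i<_P b_i$ gives $l_j(a_i)\le r_j(b_i)$; chaining these yields $l_j(a_i) \le r_j(b_i) < l_j(a_{i+1})$, so the left endpoints strictly increase around the cycle, a contradiction. The same computation with right endpoints (reading $r_j(a_{i+1}) < l_j(b_i) \le r_j(a_i)$) rules out cycles in $P\cup Q_j^{\,-1}$.

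It then remains to verify that $\rlzr = \{L_j, L_j' : 1\le j\le b\}$ is a realizer, which I expect to be routine. Every member extends $P$, so $P$ is contained in their intersection; conversely, for any incomparable pair $\{x,y\}$ pick a coordinate $j$ separating them, say $r_j(x)<l_j(y)$, so that $x<_{Q_j}y$ forces $x<_{L_j}y$ while $y<_{Q_j^{\,-1}}x$ forces $y<_{L_j'}x$. Thus both orderings of every incomparable pair occur in $\rlzr$, and the intersection of the $2b$ orders is exactly $P$. Hence $\dim(\poset)\le 2b = 2\boxi(G_\poset)$. The hypothesis $\dim(\poset)>1$ only serves to place us in the non-trivial regime, and distinctness of endpoints is not needed, since the argument uses solely the closed-interval overlap inequalities.
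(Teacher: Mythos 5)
Your construction is the same as the paper's: decompose $G_\poset$ into $\boxi(G_\poset)$ interval graphs, attach to each coordinate the interval order $Q_j$ and its reversal, and topologically sort $P\cup Q_j$ and $P\cup Q_j^{-1}$ to get two linear extensions per coordinate; the verification that the resulting $2b$ orders form a realizer is also identical. Where you genuinely differ is in the one substantive step, the acyclicity of $P\cup Q_j$. The paper argues that a shortest cycle must alternate between $P$-edges and $Q_j$-edges, has length at least $4$, and then extracts from it an induced $\textbf{2}+\textbf{2}$ inside $Q_j$, contradicting Fishburn's characterization of interval orders. You instead chain the inequalities $l_j(a_i)\le r_j(b_i)$ (from the overlap forced by each $P$-edge) with $r_j(b_i)<l_j(a_{i+1})$ (from each $Q_j$-edge) to make the left endpoints strictly increase around the cycle. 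Your argument is more elementary and self-contained -- it needs no external theorem, handles the length-$2$ case uniformly as the instance $m=1$ of the same computation, and works verbatim for the reversed order by tracking right endpoints -- whereas the paper's route is purely order-theoretic and never returns to the geometry of the intervals after defining $Q_j$. Both are correct; yours is arguably the cleaner proof of the lemma. Your closing remarks are also accurate: the hypothesis $\dim(\poset)>1$ is only needed to rule out the complete graph (where $\boxi(G_\poset)=0$ would make the bound vacuous), and no endpoint-distinctness assumption is required.
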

\begin{theorem}\label{thm:lowerBoundPosetdim}
Let $\poset=(V,P)$ be a poset and let $\chi(G_\poset)$ be the
chromatic number of its underlying comparability graph $G_\poset$
such that $\chi(G_\poset)>1$. Then,
$\dim(\poset)\ge\frac{\boxi(G_\poset)}{\chi(G_\poset)-1}$.
\end{theorem}
Note that if $\poset$ is a height-$2$ poset, then $G_\poset$ is a
bipartite graph and therefore $\chi(G_\poset)=2$.  Thus, from the above
results we have the following:
\begin{corollary}\label{cor:height2Posetdim}
Let $\poset=(V,P)$ be a height-$2$ poset and $G_\poset$ its
underlying comparability graph. Then, $\boxi(G_\poset)\le
\dim(\poset)\le2\boxi(G_\poset)$.
\end{corollary}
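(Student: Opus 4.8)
The plan is to derive the corollary directly from Theorems~\ref{thm:dimUpperBound} and~\ref{thm:lowerBoundPosetdim} by specialising them to the bipartite setting. The only structural fact I need is that the underlying comparability graph of a height-$2$ poset is bipartite. To see this, I would use the definition of a height-$2$ poset: every element of $\poset$ is either minimal or maximal. Since two minimal elements can never be comparable and two maximal elements can never be comparable, every comparable pair consists of one minimal and one maximal element. Hence, taking $A$ to be the set of minimal elements and $B$ the set of maximal elements, every edge of $G_\poset$ joins a vertex of $A$ to a vertex of $B$, so $(A,B)$ is a proper bipartition and $G_\poset$ is bipartite. Consequently $\chi(G_\poset)\le 2$, and whenever $\poset$ has at least one comparable pair we have $\chi(G_\poset)=2$.

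Granting $\chi(G_\poset)=2$, both bounds fall out by substitution. For the lower bound, Theorem~\ref{thm:lowerBoundPosetdim} gives $\dim(\poset)\ge \boxi(G_\poset)/(\chi(G_\poset)-1)=\boxi(G_\poset)/(2-1)=\boxi(G_\poset)$, which is exactly $\boxi(G_\poset)\le\dim(\poset)$. For the upper bound, Theorem~\ref{thm:dimUpperBound} directly yields $\dim(\poset)\le 2\boxi(G_\poset)$. Combining the two inequalities gives the claimed sandwich $\boxi(G_\poset)\le\dim(\poset)\le 2\boxi(G_\poset)$.

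The one place requiring care is the side conditions attached to the two theorems, and this is where I expect the only real friction. Theorem~\ref{thm:lowerBoundPosetdim} assumes $\chi(G_\poset)>1$ and Theorem~\ref{thm:dimUpperBound} assumes $\dim(\poset)>1$, so the substitution above is valid only after these hypotheses are verified for height-$2$ posets. The condition $\chi(G_\poset)>1$ fails precisely when $\poset$ is an antichain, and $\dim(\poset)>1$ fails precisely when $\poset$ is a chain; but a height-$2$ chain has at most two elements. I would therefore dispatch these boundary instances separately as trivial cases, and argue the generic statement through the clean substitution $\chi(G_\poset)=2$ above. No further computation is needed once bipartiteness is in hand, since all the quantitative work has already been carried out in the two theorems.
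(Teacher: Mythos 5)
Your proposal is correct and follows essentially the same route as the paper, which simply observes that $G_\poset$ is bipartite, hence $\chi(G_\poset)=2$, and substitutes into Theorems~\ref{thm:dimUpperBound} and~\ref{thm:lowerBoundPosetdim}. Your extra attention to the side conditions ($\chi(G_\poset)>1$ and $\dim(\poset)>1$, i.e.\ the antichain and short-chain degeneracies) is more careful than the paper's one-line derivation, but it does not change the argument in any essential way.
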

The \emph{double cover} and \emph{extended double cover} of a graph are
popular notions in graph theory. They provide a natural way to associate a
bipartite graph to the given graph. In this paper we make use of the latter
construction.
\begin{definition}\label{def:coverGraph}
The extended double cover of $G$, denoted as $G_c$ is a bipartite
graph with partite sets $A$ and $B$ which are copies of $V(G)$ such that
corresponding to every $u\in V(G)$, there are two vertices $u_A\in A$ and
$u_B\in B$ and $\{u_A,v_B\}$ is an edge in $G_c$ if and only if either
$u=v$ or $u$ is adjacent to $v$ in $G$.
\end{definition}
We prove the following lemma relating the boxicity of $G$ and $G_c$.
\begin{lemma}\label{lem:boxCovGraph}
Let $G$ be any graph and $G_c$ its extended double cover. Then,
\[
\frac{\boxi(G)}{2} \le \boxi(G_c) \le \boxi(G)+2.
\]
\end{lemma}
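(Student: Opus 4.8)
The plan is to run everything through Lemma~\ref{lem:intbox}, so that both inequalities become statements about writing a graph as an intersection of interval graphs. The single fact that drives the whole argument is that $\{u_A,v_B\}$ is an edge of $G_c$ exactly when $u=v$ or $u\sim v$ in $G$; this gives (i) $u_A$ is adjacent to $u_B$ in $G_c$ for \emph{every} $u$, and (ii) for $u\neq v$, $u_A$ is adjacent to $v_B$ in $G_c$ if and only if $u\sim v$ in $G$. Both observations are used repeatedly below.

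For the upper bound $\boxi(G_c)\le\boxi(G)+2$, I would start from an optimal representation $G=\bigcap_{i=1}^{b}I_i$ with $b=\boxi(G)$ interval graphs on $V(G)$ (Lemma~\ref{lem:intbox}), and lift each $I_i$ to an interval graph $H_i$ on $A\cup B$ by giving both $u_A$ and $u_B$ the interval that $u$ carries in $I_i$. A direct check shows that in $\bigcap_i H_i$ the vertices $u_A,v_B$ are adjacent exactly when $u=v$ or $u\sim v$ in $G$, which is what $G_c$ demands; but the intersection also inherits the unwanted edges inside $A$ and inside $B$ (a copy of $G$ on each side). To delete these while keeping all $A$--$B$ edges it suffices to intersect with the complete bipartite graph on $(A,B)$, realized as the intersection of two interval graphs: in the first, every vertex of $B$ sits on one common long interval and every vertex of $A$ at a distinct point inside it, so $B$ is a clique, $A$ is independent, and all $A$--$B$ pairs are adjacent; the second is the mirror image with $A$ and $B$ swapped. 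This writes $G_c$ as an intersection of $b+2$ interval graphs.

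For the lower bound $\boxi(G)\le 2\boxi(G_c)$, take an optimal representation $G_c=\bigcap_{i=1}^{b'}H_i$ with $b'=\boxi(G_c)$ and let $[\ell_i(w),r_i(w)]$ be the interval of $w$ in $H_i$. Observation (i) guarantees $\ell_i(u_B)\le r_i(u_A)$ and $\ell_i(u_A)\le r_i(u_B)$, so the crossed intervals $X_{i,u}=[\ell_i(u_B),r_i(u_A)]$ and $Y_{i,u}=[\ell_i(u_A),r_i(u_B)]$ are well defined. I would use the $2b'$ interval graphs given by the $X_{i,\cdot}$ and the $Y_{i,\cdot}$ as a box representation of $G$. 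The point is that $X_{i,u}\cap X_{i,v}\neq\emptyset$ is equivalent to $r_i(u_A)\ge\ell_i(v_B)$ together with $r_i(v_A)\ge\ell_i(u_B)$, while $Y_{i,u}\cap Y_{i,v}\neq\emptyset$ is equivalent to $r_i(v_B)\ge\ell_i(u_A)$ together with $r_i(u_B)\ge\ell_i(v_A)$; conjoining these four inequalities at a fixed $i$ says precisely that $u_A$ meets $v_B$ and $v_A$ meets $u_B$ in $H_i$. Hence $u$ and $v$ share all $2b'$ coordinates iff $u_A\sim v_B$ and $v_A\sim u_B$ throughout $G_c$, which by observation (ii) is exactly $u\sim v$ in $G$ for $u\neq v$.

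The part needing the most care---and the reason the factor $2$ appears rather than $1$---is this lower bound: the relation we must reconstruct, $u\sim v$, is symmetric, whereas each $H_i$ only records the asymmetric ``$u_A$ left of $v_B$ or right of $v_B$'' data. The naive attempts (collapsing $f_i(u_A)\cup f_i(u_B)$ into one interval, or using the same-side intervals $[\ell_i(u_A),r_i(u_A)]$ and $[\ell_i(u_B),r_i(u_B)]$) both fail, since four intervals with $u_A,u_B,v_A,v_B$ all mutually crossing in the required pattern need not force $f_i(u_A)$ to meet $f_i(v_A)$; splitting each $H_i$ into the crossed families $X$ and $Y$ is exactly what isolates the ``left'' and ``right'' failure modes of a single missing intersection and keeps each resulting graph genuinely an interval graph. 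A minor bookkeeping check is that the degenerate cases (e.g. $\boxi(G)=0$, where $G_c$ is complete bipartite) remain consistent with the stated bounds.
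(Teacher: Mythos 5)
Your proof is correct. The upper bound is essentially the paper's own argument: lift each interval graph of a representation of $G$ to $A\cup B$ by duplicating intervals, then kill the spurious within-$A$ and within-$B$ edges with two extra interval graphs whose intersection is the complete bipartite graph on $(A,B)$. The lower bound, however, takes a genuinely more direct route. The paper factors it as $\boxi(G)\le\boxi(G_c^*)\le 2\boxi(G_c)$ through the \emph{associated co-bipartite graph} $G_c^*$ of $G_c$: it first proves a standalone lemma (Lemma~\ref{lem:bipcobip}) comparing $\boxi(H)$ and $\boxi(H^*)$ for an arbitrary bipartite $H$, then passes from a canonical interval representation of each co-bipartite interval graph $J_i$ to a single crossed interval $[l(u_B,J_i),r(u_A,J_i)]$ per vertex. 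Your construction collapses these two steps into one: the families $X_{i,u}=[\ell_i(u_B),r_i(u_A)]$ and $Y_{i,u}=[\ell_i(u_A),r_i(u_B)]$ are exactly the two crossed-interval graphs that the composition of the paper's steps would yield (one for each of the two ``orientations'' that Lemma~\ref{lem:bipcobip} introduces), and your verification that the four inequalities at index $i$ encode precisely ``$u_A$ meets $v_B$ and $v_A$ meets $u_B$ in $H_i$'' is the whole content of the paper's canonical-representation bookkeeping. What the paper's detour buys is the reusable inequality $\frac{1}{2}\boxi(H^*)\le\boxi(H)\le\boxi(H^*)$ for general bipartite $H$; what yours buys is a shorter, self-contained proof that never mentions $G_c^*$ or canonical representations. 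Your closing remark about why same-side or merged intervals fail is accurate and correctly identifies where the factor $2$ comes from.
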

Let $\poset_c$ be the natural height-$2$ poset associated with
$G_c$, i.e. the elements in $A$ are the minimal elements and the
elements in $B$ are the maximal elements. Combining Corollary
\ref{cor:height2Posetdim} and Lemma \ref{lem:boxCovGraph} we have the
following theorem:
\begin{theorem}
Let $G$ be a graph and $\poset_c$ be the natural height-$2$
poset associated with its extended double cover. Then,
$\frac{\dim(\poset_c)}{2}-2\le\boxi(G)\le 2\dim(\poset_c)$ and therefore
$\boxi(G)=\Theta(\dim(\poset_c))$.
\end{theorem}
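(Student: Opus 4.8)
The plan is to obtain the theorem purely by composing the two preceding results, Corollary~\ref{cor:height2Posetdim} and Lemma~\ref{lem:boxCovGraph}, so the real work reduces to checking that they apply to the right objects and then chaining the inequalities in the correct direction.

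First I would observe that, by construction, $\poset_c$ is a height-$2$ poset whose underlying comparability graph is precisely $G_c$: the elements of $A$ are the minimal elements, those of $B$ the maximal elements, and two elements are comparable in $\poset_c$ exactly when the corresponding vertices are adjacent in $G_c$. Hence $G_{\poset_c}=G_c$, so Corollary~\ref{cor:height2Posetdim} applies to $\poset_c$ and gives
\[
\boxi(G_c)\le \dim(\poset_c)\le 2\boxi(G_c).
\]
Next I would invoke Lemma~\ref{lem:boxCovGraph}, which relates the boxicity of $G$ to that of its extended double cover:
\[
\frac{\boxi(G)}{2}\le \boxi(G_c)\le \boxi(G)+2.
\]

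Chaining the two displays yields both bounds. For the lower bound on $\boxi(G)$, combine $\dim(\poset_c)\le 2\boxi(G_c)$ with $\boxi(G_c)\le\boxi(G)+2$ to get $\dim(\poset_c)\le 2\boxi(G)+4$, i.e. $\boxi(G)\ge \dim(\poset_c)/2-2$. For the upper bound, combine $\boxi(G_c)\le\dim(\poset_c)$ with $\boxi(G)/2\le\boxi(G_c)$ to get $\boxi(G)\le 2\dim(\poset_c)$. Together these give $\dim(\poset_c)/2-2\le\boxi(G)\le 2\dim(\poset_c)$, and since the two sides are linearly related, $\boxi(G)=\Theta(\dim(\poset_c))$.

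Because both ingredients are already established, there is no genuinely hard step here; the only point requiring care is the identification $G_{\poset_c}=G_c$ together with the height-$2$ property, which is exactly what legitimizes the use of Corollary~\ref{cor:height2Posetdim}. The one minor caveat I would flag is the degenerate case $\dim(\poset_c)\le 1$, where the corollary's upper bound is inherited from Theorem~\ref{thm:dimUpperBound} under the hypothesis $\dim>1$; in all such trivial instances the asserted inequalities hold immediately and do not affect the asymptotic conclusion.
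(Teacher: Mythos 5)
Your proof is correct and is exactly the paper's argument: the authors also obtain the theorem by applying Corollary~\ref{cor:height2Posetdim} to $\poset_c$ (whose underlying comparability graph is $G_c$) and chaining with Lemma~\ref{lem:boxCovGraph}, with the same two inequality compositions you give. The extra care you take with the identification $G_{\poset_c}=G_c$ and the degenerate case is reasonable but not something the paper spells out.
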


\subsection{Consequences}
\subsubsection{New upper bounds for poset dimension:}
Our results lead to some hitherto
unknown bounds for poset dimension. Some general bounds obtained
in this manner are listed below:
\begin{enumerate}
\item It is proved in \cite{boxicityTreewidthSunilNaveen} that for any
graph $G$, boxicity of $G$ is at most $\tw(G)+2$. For more information on
\emph{tree-width} see \cite{touristGuideTreewidthBodlaender}.  Applying
this bound in Theorem \ref{thm:dimUpperBound} it immediately follows
that, for a poset $\poset$, $\dim(\poset)\le 2\tw(G_\poset)+4$.
\item The \emph{threshold dimension} of a graph $G$ is the minimum
number of \emph{threshold graphs} such that $G$ is the edge union of
these graphs. For more on threshold graphs and threshold
dimension see \cite{algGraphTheoryPerfectGraphsGolumbic}. Cozzens and
Halsey \cite{relationshipTgSplitCozzensHalsey} proved that $\boxi(G)\le
\textrm{threshold-dimension}(\comp{G})$, where $\comp{G}$ is
the complement of $G$. From this it follows that
$\dim(\poset)\le \textrm{$2$ threshold-dimension}(\comp{G_\poset})$.
\item In \cite{cubicityBoxVCSunilAnitaChintan} it is proved that
$\boxi(G)\le\floor{\frac{\textrm{MVC$(G)$}}{2}}+1$, where $\mvc(G)$ is
the cardinality of the \emph{minimum vertex cover} of $G$. Therefore,
we have $\dim(\poset)\le\mvc(G_\poset)+2$.
\end{enumerate}
Some more interesting results can be obtained if we restrict $G_\poset$ to
belong to certain subclasses of graphs. Note that there are several
research papers in the partial order literature which study the dimension
of posets whose underlying comparability graph has some special structure
-- interval order, semi order and crown posets are some examples.
\begin{enumerate}
\setcounter{enumi}{3}
\item Scheinerman \cite{phdThesisScheinerman} proved that the
boxicity of outer planar graphs is at most 2 and later Thomassen
\cite{intervalRepPlanarGraphsThomassen} proved that the boxicity of
planar graphs is at most 3.  Therefore, it follows that $\dim(\poset)\le
4$ if $G_\poset$ is outer planar and $\dim(\poset)\le 6$ if $G_\poset$ is
planar.
\item Bhowmick and Chandran \cite{boxCubATFreeGraphsBhowmickChandran}
proved that boxicity of AT-free graphs is at most $\chi(G_\poset)$. Hence, 
$\dim(\poset)\le 2\chi(G_\poset)$, if $G_\poset$ is AT-free. 
\item If $G_\poset$ is an interval graph, then, we get from Theorem
\ref{thm:dimUpperBound}, $\dim(\poset)\le 2$, since $\boxi(G_\poset)=1$.
However, observing that interval graphs are co-comparability graphs
this result would follow also as a consequence of a result by Dushnik
and Miller \cite{partiallyOrderedSetsDushnik}: $\dim(\poset)\le 2$
if and only if $G_\poset$ is a co-comparability graph.
\item The boxicity of a $d$-\emph{dimensional hypercube} is $O(d/\log(d))$
\cite{cubicityHypercubeSunilNaveen}. Therefore, if $G_\poset$
is a height-2 poset which corresponds to a $d$-\emph{dimensional
hypercube}, then from Corollary \ref{cor:height2Posetdim} we have
$\dim(\poset)=O(d/\log(d))$.
\item Chandran et al. \cite{boxicityChordalBipartiteSunilFrancisRogers}
recently proved that chordal bipartite graphs have arbitrarily high
boxicity. From Corollary \ref{cor:height2Posetdim} it follows that
height-$2$ posets whose underlying comparability graph are chordal
bipartite graphs can have arbitrarily high dimension.
\end{enumerate}

\subsubsection{Improved upper bound for boxicity based on maximum degree:}
F\"{u}redi and Kahn \cite{dimensionsOfOrderedSetsFurediKahn}
proved the following
\begin{lemma}\label{thm:furediKahn}
Let $\poset$ be a poset and $\Delta$ be the maximum degree
of $G_\poset$. Then, there exists a constant $c$ such that
$\dim(\poset)<c\Delta(\log\Delta)^2$.
\end{lemma}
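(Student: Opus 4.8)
The plan is to exhibit a realizer of $\poset$ whose size obeys the claimed bound, using a probabilistic construction together with the Lov\'asz Local Lemma. The first step would be to reduce the problem to reversing \emph{critical pairs}. Recall that a family of linear extensions $\{L_1,\dots,L_t\}$ is a realizer of $P$ if and only if for every ordered incomparable pair $(a,b)$ some $L_i$ satisfies $a>_{L_i}b$, and that it suffices to handle the critical pairs, i.e. the incomparable pairs $(a,b)$ with $D(a)\subseteq D(b)$ and $U(b)\subseteq U(a)$, where $D(\cdot)$ and $U(\cdot)$ are the down-set and up-set. Thus I want a small family of linear extensions such that every critical pair is \emph{reversed} (placed in the order $a$ above $b$) by at least one member of the family; the criticality condition is exactly what guarantees that each such reversal is individually achievable by a legal linear extension.

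\noindent The second step would be to define a single \emph{random} linear extension and to estimate its reversal probability. Assign to each element $v$ an independent uniform real $\sigma(v)\in[0,1]$ and build $L$ greedily: repeatedly delete a currently minimal element of the remaining poset, breaking ties by smallest $\sigma$-value, placing the deleted elements from bottom to top. This always produces a genuine linear extension of $P$. The heart of the argument is to show that for each critical pair $(a,b)$ the probability that $L$ reverses it is bounded below by a quantity of order $1/\Delta$ (up to logarithmic factors): since the comparability graph has maximum degree $\Delta$, whether $a$ is removed before or after $b$ is decided only by the relative $\sigma$-ranks of $a$, $b$, and the at most $2\Delta$ elements comparable to one of them, and transitivity of $P$ together with the criticality of $(a,b)$ lets one localize the reversal event to these $O(\Delta)$ elements.

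\noindent The third step would be to take $t$ independent copies $L_1,\dots,L_t$ and, for each critical pair $(a,b)$, let $A_{(a,b)}$ be the bad event that none of the $L_i$ reverses it, so that $\prob[A_{(a,b)}]\le(1-\Omega(1/\Delta))^{t}$. Because $A_{(a,b)}$ is determined by the random reals on the closed neighbourhoods of $a$ and $b$, two bad events are independent unless these vertex sets meet, so each bad event is mutually independent of all but $O(\Delta^2)$ others. The Lov\'asz Local Lemma then applies once $e\cdot(1-\Omega(1/\Delta))^{t}\cdot O(\Delta^2)\le 1$; solving for $t$ shows that $t=O(\Delta\log\Delta)$ per pass already suffices for all critical pairs to be reversed simultaneously, and hence for $\{L_1,\dots,L_t\}$ to be a realizer, which yields the stated bound $\dim(\poset)<c\,\Delta(\log\Delta)^2$ comfortably.

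\noindent The main obstacle is the middle step: making the reversal-probability lower bound and the dependency bound simultaneously rigorous and tight enough to control the final power of $\log\Delta$. Concretely, one must prove that the greedy deletion process genuinely localizes the reversal of $(a,b)$ to the $O(\Delta)$ nearby elements, rather than being influenced through long comparability chains, and must track how the per-extension success probability (which in the honest analysis is weaker than $1/\Delta$ by a logarithmic factor) combines with the local-lemma exponent; it is precisely this accumulation of two logarithmic losses that produces the $(\log\Delta)^2$ rather than a worse polynomial factor in $\Delta$.
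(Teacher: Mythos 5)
First, a point of reference: the paper does not prove this statement at all --- it is quoted verbatim from F\"uredi and Kahn \cite{dimensionsOfOrderedSetsFurediKahn} and used as a black box, so there is no in-paper proof to compare against. Judged on its own terms, your outline follows the general spirit of the original probabilistic argument (critical pairs, random linear extensions, the Lov\'asz Local Lemma), and your Steps 1 and 2 are essentially sound: the reduction to critical pairs is standard, and for a critical pair $(a,b)$ the event ``$\sigma(a)>\sigma(z)$ for every $z\in (D(b)\cup\{b\})\setminus D(a)$'' is a genuine sufficient condition for the greedy extension to place $a$ above $b$, has probability at least $1/(\Delta+2)$, and is determined by the labels on $\{a\}\cup N[b]$.

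The gap is in Step 3, and it is fatal as stated. The bad event $A_{(a,b)}$ depends on $\sigma(a)$, and a single element can lie in $\Omega(n)$ critical pairs even when $\Delta$ is bounded: in the poset consisting of $n$ disjoint two-element chains $x_i<y_i$ one has $\Delta=1$, yet every pair $(x_i,y_j)$ with $i\ne j$ is critical, so each $x_i$ appears as the first coordinate of $n-1$ critical pairs. Two events $A_{(a,b)}$ and $A_{(a',b')}$ fail to be independent as soon as \emph{one} of $a',b'$ meets $N[a]\cup N[b]$, so the dependency degree is $\Theta(\Delta^2\cdot n)$ in the worst case, not $O(\Delta^2)$. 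Feeding that into the Local Lemma forces $t=\Omega(\Delta\log n)$, a bound that depends on $n$ and therefore cannot yield $O(\Delta\log^2\Delta)$. This is precisely the obstruction that makes the F\"uredi--Kahn proof nontrivial: they do not apply the Local Lemma to the reversal events directly, but to an auxiliary random partition of the ground set into roughly $\Delta/\log\Delta$ classes, whose bad events are indexed by single elements and determined by the partition restricted to one closed neighbourhood (so the dependency degree genuinely is $\mathrm{poly}(\Delta)$); the resulting balanced partition is then converted into a realizer by a separate, non-probabilistic construction. Without some device of this kind to break the long-range dependencies created by shared elements, your outline cannot be completed to a proof of the stated bound.
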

From Lemma \ref{lem:boxCovGraph} and Corollary \ref{cor:height2Posetdim}
we have $\boxi(G)\le 2\boxi(G_c)\le 2\dim(\poset_c)$, where $G_c$
is the extended double cover of $G$. Note that by construction
$\Delta(G_c)=\Delta(G)+1$. On applying the above lemma, we have
\begin{theorem}
For any graph $G$ having maximum degree $\Delta$ there exists a constant
$c'$ such that $\boxi(G) <c'\Delta(\log\Delta)^2$. 
\end{theorem}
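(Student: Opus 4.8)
The plan is to obtain the bound purely by composing the three ingredients already established, applied to the extended double cover $G_c$ of $G$ and the natural height-$2$ poset $\poset_c$ associated with it. First I would pass from $G$ to $G_c$: the left-hand inequality of Lemma \ref{lem:boxCovGraph} gives $\boxi(G) \le 2\boxi(G_c)$. Next, since $\poset_c$ is a height-$2$ poset whose underlying comparability graph is precisely $G_c$, Corollary \ref{cor:height2Posetdim} yields $\boxi(G_c) \le \dim(\poset_c)$. Chaining these gives the clean intermediate bound $\boxi(G) \le 2\dim(\poset_c)$, reducing the problem to bounding the dimension of $\poset_c$.

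The second step controls $\dim(\poset_c)$ through the maximum degree, where Lemma \ref{thm:furediKahn} (F\"{u}redi--Kahn) does the work. The key structural fact I would record is that the extended double cover makes two copies of each vertex and adds, for each $u$, the diagonal edge $\{u_A,u_B\}$ on top of the edges inherited from $G$; hence $\Delta(G_c) = \Delta(G)+1 = \Delta+1$. Applying Lemma \ref{thm:furediKahn} to $\poset_c$, whose comparability graph $G_c$ has maximum degree $\Delta+1$, produces a constant $c$ with $\dim(\poset_c) < c(\Delta+1)\big(\log(\Delta+1)\big)^2$. Substituting into $\boxi(G) \le 2\dim(\poset_c)$ gives $\boxi(G) < 2c(\Delta+1)\big(\log(\Delta+1)\big)^2$.

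The last step---the only place calling for any care, and even there the work is routine rather than difficult---is to rewrite this in the stated form $c'\Delta(\log\Delta)^2$. For $\Delta \ge 2$ one has $\Delta+1 \le 2\Delta$ and $\log(\Delta+1) \le 2\log\Delta$, so $(\Delta+1)\big(\log(\Delta+1)\big)^2 \le 8\,\Delta(\log\Delta)^2$; taking $c' = 16c$ then suffices. The degenerate small-degree cases (notably $\Delta \in \{0,1\}$, where $\log\Delta$ carries no information) must be absorbed into the convention that the bound is asserted for $\Delta$ large enough, or equivalently into a larger constant, using the fact that bounded-degree graphs have bounded boxicity. I expect no genuine obstacle: the entire argument is a composition of prior inequalities, and the only subtlety is this final normalization of the argument of the logarithm together with the low-degree boundary cases.
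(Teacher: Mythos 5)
Your proposal is correct and follows exactly the paper's route: the chain $\boxi(G)\le 2\boxi(G_c)\le 2\dim(\poset_c)$ from Lemma \ref{lem:boxCovGraph} and Corollary \ref{cor:height2Posetdim}, followed by the F\"{u}redi--Kahn bound applied with $\Delta(G_c)=\Delta(G)+1$. Your extra care in normalizing $(\Delta+1)(\log(\Delta+1))^2$ back to $\Delta(\log\Delta)^2$ and handling small $\Delta$ is a detail the paper leaves implicit, but it is the same argument.
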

This result is an improvement over the previous upper bound of $\Delta^2+2$
by Esperet \cite{boxicityGraphsBoundedDegreeEsperet}.

\subsubsection{Counter examples to the conjecture of
\cite{boxicityMaxDegreeSunilNaveenFrancis}:} Chandran et
al. \cite{boxicityMaxDegreeSunilNaveenFrancis} conjectured that boxicity
of a graph is $O(\Delta)$. We use a result by Erd\H{o}s, Kierstead and
Trotter \cite{dimensionRandomOrderedSetsErdosEtal} to show that there
exist graphs with boxicity $\Omega(\Delta\log\Delta)$, hence disproving
the conjecture.  Let $\mathbb{P}(n,p)$ be the probability space of
height-$2$ posets with $n$ minimal elements forming set $A$ and $n$
maximal elements forming set $B$, where for any $a\in A$ and $b\in B$,
$\prob(a<b)=p(n)=p$.  They proved the following:
\begin{theorem}{\textnormal{\cite{dimensionRandomOrderedSetsErdosEtal}}}
For every $\epsilon>0$, there exists $\delta>0$ so that if
$(\log^{1+\epsilon}n)/n<p<1-n^{-1+\epsilon}$, then, $\dim(\poset) > (\delta
pn\log(pn))/(1+\delta p\log(pn))$ for almost all $\poset\in \mathbb{P}(n,p)$.
\end{theorem}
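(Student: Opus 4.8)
The plan is to use the classical characterization of the dimension of a height-$2$ poset in terms of reversing critical pairs, and then run a first-moment (union bound) argument over the random choice in $\mathbb{P}(n,p)$. Recall that for a height-$2$ poset $\poset$ with minimal set $A$ and maximal set $B$, the ordered incomparable pairs $(a,b)$ with $a\in A$, $b\in B$ and $a\not<b$ are exactly the \emph{critical pairs}, and $\dim(\poset)$ equals the minimum number of linear extensions $L_1,\dots,L_t$ such that every critical pair $(a,b)$ is reversed by some $L_i$, i.e. $b<_{L_i}a$. A single linear extension $L$ can reverse a set $C$ of critical pairs if and only if $C$ contains no alternating cycle, that is, no sequence $(a_1,b_1),\dots,(a_k,b_k)$ with $a_i<b_{i+1}$ in $P$ (indices mod $k$); in particular no two reversed pairs $(a_1,b_1),(a_2,b_2)$ may satisfy both $a_1<b_2$ and $a_2<b_1$.

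First I would pin down the number of critical pairs. Each of the $n^2$ ordered pairs $(a,b)$ is critical precisely when $a\not<b$, which happens independently with probability $1-p$, so by a Chernoff bound the number of critical pairs concentrates around $(1-p)n^2$. Since the hypothesis forces $1-p>n^{-1+\epsilon}$, this is $\Omega(n^{1+\epsilon})$, an abundant supply of pairs that must be distributed among the $\dim(\poset)$ extensions. The lower bound will come from the elementary covering inequality $\dim(\poset)\ge (\text{number of critical pairs})/m$, where $m$ is the maximum number of critical pairs that a single linear extension can reverse.

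The core of the argument is therefore an upper bound on $m$ holding with high probability. To obtain it I would fix an arbitrary linear order $L$ on $A\cup B$, bound the probability that the set of critical pairs reversed by $L$ is alternating-cycle-free yet larger than a threshold $m=m(n,p)$, and then take a union bound over all $(2n)!\le e^{O(n\log n)}$ candidate orders. The structural fact to exploit is that an alternating-cycle-free reversed set behaves like a forest/antichain-type object with respect to the forcing relation $a_i<b_j$: the comparabilities of $P$ severely restrict which reversed pairs can coexist, so a large reversed set forces many ``missing'' comparabilities among the involved elements, an event that is exponentially unlikely for a random $\poset$ once $p$ is large enough. Balancing the threshold $m$ against the entropy $e^{O(n\log n)}$ of the order is what produces the precise shape $(\delta pn\log(pn))/(1+\delta p\log(pn))$; note that for small $p$ few comparabilities exist, few alternating-cycle constraints arise, and $m$ is correspondingly large, consistent with the dimension being small in that regime.

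The hard part will be precisely this last step: controlling $m$ uniformly over the super-exponentially many linear orders. A naive bound on the reversed-set size is too weak to beat the $e^{O(n\log n)}$ cost of the union bound (and, incidentally, merely planting a large \emph{standard example} yields only a $\log n$-type bound, far short of the truth), so one must encode alternating-cycle-free families much more economically---via the positions they occupy in $L$ together with the forbidden forcing structure---or replace the crude union bound by a sharper entropy/counting estimate. Getting the tail probability for a fixed $L$ to decay fast enough to absorb the $n\log n$ entropy of the order, uniformly across the entire window $(\log^{1+\epsilon}n)/n<p<1-n^{-1+\epsilon}$, is the delicate quantitative heart of the proof.
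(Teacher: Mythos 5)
This theorem is not proved in the paper at all: it is imported verbatim from Erd\H{o}s, Kierstead and Trotter \cite{dimensionRandomOrderedSetsErdosEtal} and used as a black box to produce graphs of boxicity $\Omega(\Delta\log\Delta)$. So the only question is whether your sketch would itself constitute a proof, and it would not. You have correctly identified the standard framework (critical pairs, alternating cycles, the covering inequality $\dim(\poset)\ge Q/m$ where $Q$ is the number of critical pairs and $m$ bounds how many one linear extension can reverse), and the Chernoff estimate $Q\approx(1-p)n^2$ is fine. But the entire content of the theorem lives in the uniform upper bound on $m$, and that step is exactly the one you leave open. Worse, the organization you propose for it --- fix a linear order $L$, estimate the tail of the reversed set, union bound over all $(2n)!$ orders --- is the one you yourself concede "is too weak to beat the $e^{O(n\log n)}$ cost," and you offer only the hope that one "must encode alternating-cycle-free families much more economically." A proof plan whose decisive estimate is replaced by an acknowledgement that the naive version fails is a gap, not a proof.

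The way this is actually closed in \cite{dimensionRandomOrderedSetsErdosEtal} (see also Trotter's book) is to eliminate the linear extensions from the union bound entirely: a set of critical pairs is reversible by a single linear extension iff it contains no alternating cycle, and in particular any reversible set contains no \emph{strict alternating cycle of length two}. One therefore union-bounds over candidate families of $m$ critical pairs (of which there are at most $\binom{n^2}{m}$, not $(2n)!$) and estimates the probability that such a family is $2$-cycle-free, which requires a careful handling of the dependencies among pairs sharing elements; balancing this against the binomial entropy is what produces the threshold $m$ and hence the precise expression $(\delta pn\log(pn))/(1+\delta p\log(pn))$. None of this bookkeeping appears in your write-up, and without it the bound on $m$ --- and therefore the theorem --- is unproven. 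If your goal was only to use the theorem the way the paper does, citing it is the right move; if you intend to prove it, you need to actually carry out the counting of $2$-cycle-free families rather than gesture at it.
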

When $p=1/\log n$, for almost all posets $\poset\in \mathbb{P}(n,1/\log
n)$, $\Delta(G_\poset)<\delta_1n/\log n$ and by the above theorem
$\dim(\poset)>\delta_2n$, where $\delta_1$ and $\delta_2$ are some
positive constants (see \cite{partiallyOrderedSetsTrotter} for a
discussion on the above theorem). From Theorem \ref{thm:dimUpperBound},
it immediately implies that for almost all $\poset\in
\mathbb{P}(n,1/\log n)$, $\boxi(G_\poset)\ge\frac{\dim(\poset)}{2}
> \delta'\Delta(G_\poset)\log \Delta(G_\poset)$ for some positive
constant $\delta'$, hence proving the existence of graphs with boxicity
$\Omega(\Delta\log\Delta)$.

\subsubsection{Approximation hardness for the boxicity of bipartite graphs:}
Hegde and Jain \cite{hardnessPODHegdeJain} proved the following
\begin{theorem}\label{thm:hegdeJain}
There exists no polynomial-time algorithm to approximate the 
dimension of an $n$-element poset within a factor of $O(n^{0.5-\epsilon})$
for any $\epsilon >0$, unless $NP=ZPP$.
\end{theorem}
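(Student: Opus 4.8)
The plan is to establish this inapproximability by a gap-preserving reduction from the graph chromatic number problem. Feige and Kilian have shown that, unless $NP=ZPP$, there is no polynomial-time algorithm approximating $\chi(H)$ for a graph $H$ on $m$ vertices within a factor of $m^{1-\epsilon}$. The target factor $n^{0.5-\epsilon}$ for posets on $n$ elements strongly suggests a reduction with quadratic blow-up: if from $H$ we can build a poset $P_H$ on $n=\Theta(m^2)$ elements whose dimension is sandwiched as $c_1\chi(H)\le \dim(P_H)\le c_2\chi(H)$ for absolute constants $c_1,c_2$, then an approximation of $\dim$ within $n^{0.5-\epsilon}=\Theta(m^{1-2\epsilon})$ would yield an approximation of $\chi(H)$ within $\Theta(m^{1-2\epsilon})$, contradicting the Feige--Kilian bound. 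So the whole problem reduces to designing such a gap-preserving gadget and controlling the dimension of $P_H$ from both sides.

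The combinatorial handle I would use is the critical-pair characterization of dimension. For a poset $\poset$, $\dim(\poset)$ equals the minimum number of linear extensions that together reverse every critical pair, and a family of critical pairs is simultaneously reversible by one linear extension exactly when it contains no \emph{alternating cycle}. Thus $\dim(\poset)$ is precisely the minimum number of color classes needed to partition the critical pairs so that no class carries an alternating cycle. This turns the computation of dimension into a hypergraph-coloring quantity, which is exactly the form needed to mirror $\chi(H)$: the aim is to arrange the gadget so that alternating cycles among critical pairs correspond to the edges (equivalently, monochromatic conflicts) of $H$.

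Concretely, I would construct a height-$2$ poset $P_H$ with one block of gadget elements per vertex $v_i$ of $H$ and one per edge of $H$, wiring the comparabilities so that (i) any proper $k$-coloring of $H$ induces a realizer of $P_H$ with $k+O(1)$ linear extensions -- each color class of vertices being consolidated into a constant number of extensions -- giving the upper bound $\dim(P_H)\le \chi(H)+O(1)$; and (ii) conversely, a realizer of $P_H$ with few linear extensions forces the vertex gadgets sharing a reversible set to be pairwise non-adjacent in $H$, so that reading off which extension first reverses each vertex gadget yields a proper coloring with $O(\dim(P_H))$ colors, giving $\chi(H)\le c_2\,\dim(P_H)$. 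The edge gadgets are the mechanism that plants an alternating cycle whenever the two endpoints of an edge are assigned to a common reversible set, which is what enforces the coloring constraint in direction (ii).

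The hard part will be the soundness direction and the calibration of the blow-up. I must ensure that the only alternating cycles forced by the construction are the intended edge-induced ones, so that no spurious cross-gadget cycles inflate $\dim(P_H)$ beyond $\Theta(\chi(H))$, and I must verify that the number of elements is genuinely $\Theta(m^2)$ rather than a larger polynomial, since any blow-up exponent other than $2$ would move the transferred factor away from $n^{0.5-\epsilon}$. Once the two-sided bound $\dim(P_H)=\Theta(\chi(H))$ and the quadratic size are confirmed, the gap transfer described in the first paragraph completes the proof.
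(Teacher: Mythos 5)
This theorem is not proved in the paper at all: it is quoted as a known result of Hegde and Jain, and the paper records only that their argument works by reducing the \emph{fractional} chromatic number problem to poset dimension (with a variant for height-$2$ posets). So your proposal must be judged as a from-scratch reconstruction of that reduction, and judged that way it has a genuine gap: everything you write is scaffolding (Feige--Kilian hardness of coloring, the quadratic blow-up so that a factor $m^{1-\epsilon}$ becomes $n^{0.5-\epsilon}$, the critical-pair/alternating-cycle characterization of dimension), while the entire technical content of the theorem --- a gadget poset $P_H$ of size $\Theta(m^2)$ with a provable two-sided bound $c_1\chi(H)\le\dim(P_H)\le c_2\chi(H)$ --- is never constructed. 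Properties (i) and (ii) are stated as design goals, and you yourself flag soundness and spurious cycles as ``the hard part''; but that hard part \emph{is} the theorem.

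Moreover, the specific claims you make about the intended gadget are exactly the ones that fail in natural implementations. In a height-$2$ poset \emph{every} incomparable (minimal, maximal) pair is a critical pair and must be reversed by the realizer, so you cannot confine attention to the intended vertex-gadget pairs. Concretely, with the natural encoding $a_u<b_v \Leftrightarrow uv\in E(H)$, take $H=K_{N,N}$ minus a perfect matching, so $\chi(H)=2$: the unintended cross pairs $(a_{x_i},b_{y_i})$ are critical and pairwise form alternating cycles --- this is precisely the crown $S_N^0$ phenomenon that this paper itself exhibits as its ``deceptive example'' --- forcing $\dim(P_H)\ge N$ while $\chi(H)=2$, so the completeness direction ($\dim(P_H)\le\chi(H)+O(1)$, ``each color class consolidated into a constant number of extensions'') collapses. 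This failure mode is also why Hegde and Jain reduce from the \emph{fractional} chromatic number rather than $\chi$: the dimension-versus-coloring sandwich one can actually establish carries fractional/logarithmic slack (harmless, since it is absorbed into the $\epsilon$), not the absolute-constant integral sandwich you posit. As it stands, your text is a reduction schema with a correct outer shell and a missing core, not a proof.
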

This is achieved by reducing the \emph{fractional chromatic number problem}
on graphs to the poset dimension problem. In addition they observed that a
slight modification of their reduction will imply the same result for even
height-$2$ posets. From Corollary \ref{cor:height2Posetdim}, it is clear that for
any height-$2$ poset $\poset$, $\dim(\poset)=\Theta(\boxi(G_\poset))$.
Suppose there exists an algorithm to compute the boxicity of bipartite
graphs with approximation factor $O(n^{0.5-\epsilon})$, for some
$\epsilon>0$, then, it is clear that the same algorithm can be used to
compute the dimension of height-$2$ posets with approximation factor
$O(n^{0.5-\epsilon})$, a contradiction. Hence,
\begin{theorem}
There exists no polynomial-time algorithm to approximate the boxicity
of a bipartite graph on $n$-vertices with a factor of $O(n^{0.5-\epsilon})$
for any $\epsilon >0$, unless $NP=ZPP$.
\end{theorem}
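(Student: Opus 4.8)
The plan is to give an approximation-preserving reduction from the dimension problem on height-$2$ posets to the boxicity problem on bipartite graphs, so that the hardness in Theorem~\ref{thm:hegdeJain} transfers directly. The crucial point is that Corollary~\ref{cor:height2Posetdim} pins $\boxi(G_\poset)$ and $\dim(\poset)$ to within a factor of $2$ of each other whenever $\poset$ is a height-$2$ poset, and that this correspondence keeps the instance size fixed: the ground set of $\poset$ is exactly the vertex set of the bipartite graph $G_\poset$, so an $n$-element height-$2$ poset corresponds to a bipartite graph on $n$ vertices. Hence an approximation guarantee stated as a function of $n$ on one side becomes a guarantee in the same $n$ on the other.

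Concretely, I would argue by contradiction. Suppose that for some fixed $\epsilon>0$ there is a polynomial-time algorithm $\mathcal{A}$ that, on every bipartite graph on $n$ vertices, outputs a value $B$ with $\boxi(G_\poset)\le B\le \rho\cdot\boxi(G_\poset)$, where $\rho=O(n^{0.5-\epsilon})$. Given any height-$2$ poset $\poset$ on $n$ elements, I would form its underlying comparability graph $G_\poset$ (bipartite, on the same $n$ vertices) in polynomial time and run $\mathcal{A}$ on it. Combining the output with the two inequalities of Corollary~\ref{cor:height2Posetdim}, namely $\dim(\poset)/2\le\boxi(G_\poset)\le\dim(\poset)$, one obtains
\[
\frac{\dim(\poset)}{2}\le \boxi(G_\poset)\le B\le \rho\cdot\boxi(G_\poset)\le \rho\cdot\dim(\poset),
\]
so that reporting $2B$ yields $\dim(\poset)\le 2B\le 2\rho\cdot\dim(\poset)$. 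Since the leading constant $2$ is absorbed into the $O(\cdot)$ and does not affect the exponent, this is a polynomial-time $O(n^{0.5-\epsilon})$-approximation for the dimension of height-$2$ posets.

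Finally I would invoke the strengthening of Theorem~\ref{thm:hegdeJain} noted by Hegde and Jain~\cite{hardnessPODHegdeJain}, namely that their inapproximability already holds for height-$2$ posets. The reduction of the previous paragraph then contradicts that statement unless $NP=ZPP$, which proves the claim. The argument is essentially a bookkeeping check rather than a combinatorial one, so I do not expect a genuine obstacle; the only points that need care are verifying that the reduction is size-preserving (so the $n$ in the two approximation factors is truly the same) and confirming that the constant-factor slack introduced by Corollary~\ref{cor:height2Posetdim} is harmless in the $n^{0.5-\epsilon}$ regime. Both are immediate from the definitions.
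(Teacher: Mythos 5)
Your proposal is correct and follows exactly the paper's own argument: both use Corollary~\ref{cor:height2Posetdim} to relate $\dim(\poset)$ and $\boxi(G_\poset)$ within a constant factor on the same $n$-element ground set, and then transfer the Hegde--Jain inapproximability for height-$2$ posets to bipartite boxicity. You have merely spelled out the bookkeeping that the paper leaves implicit.
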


\section{Notations}
Let $[n]$ denote $\{1,2,\ldots,n\}$ where $n$ is a positive integer. For
any graph $G$, let $V(G)$ and $E(G)$ denote its vertex set and edge set
respectively. If $G$ is undirected, for any $u,v\in V(G)$, $\{u,v\}\in
E(G)$ means $u$ is adjacent to $v$ and if $G$ is directed, $(u,v)\in E(G)$
means there is a directed edge from $u$ to $v$. Whenever we refer to an
$AB$ bipartite (or co-bipartite) graph, we imply that its vertex set is
partitioned into non-empty sets $A$ and $B$ where both these sets induce
independent sets (cliques respectively).

In a poset $\poset=(S,P)$, the notations $aPb$, $a\le b$ in $P$ and
$(a,b)\in P$ are equivalent and are used interchangeably. $G_\poset$
denotes the underlying comparability graph of $\poset$. A subset of
$\poset$ is called a \emph{chain} if each pair of distinct elements is
comparable. If each pair of distinct elements is incomparable, then
it is called an \emph{antichain}. Given an $AB$ bipartite graph $G$,
the natural poset associated with $G$ with respect to the bipartition
is the poset obtained by taking $A$ to be the set of minimal elements
and $B$ to be the set of maximal elements. In particular, if $G_c$
is the extended double cover of $G$, we denote by $\poset_c$ the natural
associated poset of $G_c$.

Suppose $I$ is an interval graph. Let $f_I$ be an \emph{interval
representation} for $I$, i.e. it is a mapping from the vertex
set to closed intervals on the real line such that for any two
vertices $u$ and $v$, $\{u,v\}\in E(I)$ if and only if $f_I(u)\cap
f_I(v)\ne\varnothing$. Let $l(u,f_I)$ and $r(u,f_I)$ denote the left
and right end points of the interval corresponding to the vertex $u$
respectively. In this paper, we will never consider more than one interval
representation for an interval graph. Therefore, we will simplify the
notations to $l(u,I)$ and $r(u,I)$. Further, when there is no ambiguity
about the graph under consideration and its interval representation,
we simply denote the left and right end points as $l(u)$ and $r(u)$
respectively. Note that for any interval graph there exists an interval
representation with all end points distinct. Such a representation is
called a \emph{distinguishing} interval representation. It is an easy
exercise to derive such a distinguishing interval representation starting
from an arbitrary interval representation of the graph.

\section{Proof of Theorem \ref{thm:dimUpperBound}}
Let $\boxi(G_\poset)=k$. Note that since $\dim(\poset)>1$, $G_\poset$
cannot be a complete graph and therefore $k\ge1$. Let
$\igr=\{I_1,I_2,\ldots,I_k\}$ be a set of interval graphs such
that $G_\poset=\bigcap_{i=1}^kI_i$. Now, corresponding to each $I_i$
we will construct two total orders $L_i^1$ and $L_i^2$ such that
$\rlzr=\{L_i^j|i\in[k]\textrm{ and }j\in[2]\}$ is a realizer of $\poset$.

\newcommand{\oP}{\overline{P}}
Let $I\in\igr$ and $f_I$ be an interval representation of $I$. We will
define two partial orders $P_I$ and $\oP_I$ as follows: $\forall a\in V$,
$(a,a)$ belongs to $P_I$ and $\oP_I$ and for every
non-adjacent pair of vertices $a,b\in V$ with respect to $I$,
\[
\left.
\begin{array}{l}
(a,b)\in P_I \\
(b,a)\in \oP_I
\end{array}\right\} \textrm{ if and only if } r(a,f_I)<l(b,f_I).
\]
Partial orders constructed in the above manner from a collection of closed
intervals are called \emph{interval orders} (See
\cite{partiallyOrderedSetsTrotter} for more details).
It is easy to see that $\comp{I}$ (the complement of $I$) is the underlying
comparability graph of both $P_I$ and $\oP_I$.

Let $G_1$ and $G_2$ be two directed graphs with vertex set $V$
and edge set $E(G_1)=(P\cup P_I)\setminus\{(a,a)|a\in V\}$ and
$E(G_2)=(P\cup\oP_I)\setminus\{(a,a)|a\in V\}$ respectively. Note that
from the definition it is obvious that there are no directed loops in
$G_1$ and $G_2$.
\begin{lemma}
$G_1$ and $G_2$ are acyclic directed graphs.
\end{lemma}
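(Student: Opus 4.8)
The plan is to prove both digraphs acyclic by a shortest-cycle argument, handling $G_1$ first and deducing $G_2$ by a reflection. Two facts drive everything. First, $P$ and $P_I$ are each partial orders (transitive and antisymmetric), so a directed cycle using edges of only one kind is immediately impossible. Second, since $G_\poset=\bigcap_{i=1}^{k} I_i$ is a subgraph of $I$, any pair comparable in $P$ is adjacent in $I$, whereas $P_I$ relates only pairs that are \emph{non}-adjacent in $I$; hence no pair is related by both orders, and in particular their edge sets are disjoint.

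Assume for contradiction that $G_1$ contains a directed cycle and fix a shortest one, $x_1\to x_2\to\cdots\to x_m\to x_1$. If two consecutive edges were both $P$-edges, transitivity of $P$ would supply a chord shortening the cycle (or, if it closed a $2$-cycle, would violate antisymmetry); the same holds for two consecutive $P_I$-edges. So the edge types must alternate, forcing $m=2s$ to be even with both kinds present; orient things so that the $P_I$-edges sit in odd positions and the $P$-edges in even positions.

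The heart of the argument is a one-coordinate potential read off the interval representation $f_I$. A $P_I$-edge $x\to y$ means $r(x,f_I)<l(y,f_I)$, while a $P$-edge $x\to y$ forces the intervals of $x$ and $y$ to intersect, so in particular $l(x,f_I)\le r(y,f_I)$. Chaining a $P$-edge $x_{2j}\to x_{2j+1}$ with the $P_I$-edge $x_{2j+1}\to x_{2j+2}$ that follows it yields $l(x_{2j},f_I)\le r(x_{2j+1},f_I)<l(x_{2j+2},f_I)$. Thus the left endpoints of the even-indexed vertices strictly increase as one travels around the cycle, which on returning to the start gives $l(x_2,f_I)<l(x_2,f_I)$, a contradiction; hence $G_1$ is acyclic. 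For $G_2$, I would observe that replacing each interval $[l(v,f_I),r(v,f_I)]$ by its reflection $[-r(v,f_I),-l(v,f_I)]$ is again an interval representation of $I$ whose induced interval order is exactly $\oP_I$; so $G_2$ relative to $f_I$ coincides with $G_1$ relative to the reflected representation, and acyclicity is inherited.

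The main obstacle is precisely the interaction of the two edge types. The tempting shortcut---showing that $P\cup P_I$ is itself a partial order and therefore acyclic---fails, because this union need not be transitive: a $P$-step followed by a $P_I$-step can land on a pair that is adjacent in $I$ but incomparable in $P$, hence related by neither order. The alternating potential argument above is what replaces transitivity, using the disjointness of $P$ and $P_I$ to force strict alternation and the interval geometry to produce the monotone quantity.
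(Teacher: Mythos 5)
Your proof is correct, and its opening is the same as the paper's: take a shortest directed cycle, use transitivity and antisymmetry of $P$ and of $P_I$ (plus the disjointness of the two relations) to force the edge types to alternate strictly. Where you genuinely diverge is in how the alternating cycle is destroyed. The paper picks two $P_I$-edges $(a_1,a_2)$ and $(a_3,a_4)$ from the cycle, checks pair by pair that they form an induced $\mathbf{2}+\mathbf{2}$ in $P_I$, and then invokes Fishburn's characterization that an interval order contains no induced $\mathbf{2}+\mathbf{2}$. You instead run a monotone-potential argument directly on the interval representation: a $P$-edge $x\to y$ gives $l(x)\le r(y)$ because the intervals meet, a $P_I$-edge $y\to z$ gives $r(y)<l(z)$, so each $P$-then-$P_I$ pair strictly increases the left endpoint of the even-indexed vertices, and closing the cycle yields $l(x_2)<l(x_2)$. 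Both arguments are sound; yours is self-contained and more elementary (no appeal to the $\mathbf{2}+\mathbf{2}$ theorem, and the same inequality chain even disposes of the length-$2$ case uniformly), while the paper's is shorter modulo the imported characterization and makes the structural obstruction explicit. Your reflection of the representation, $[l(v),r(v)]\mapsto[-r(v),-l(v)]$, also makes precise the paper's unelaborated remark that ``a similar proof holds'' for $G_2$, since the reflected representation realizes exactly $\overline{P}_I$.
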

\begin{proof}
We will prove the lemma for $G_1$ -- a similar proof holds for $G_2$.
First of all, since $G_\poset$ is not a complete graph $P_I\ne\varnothing$.
Suppose $P_I$ is a total order, i.e. if $P$ is an antichain, then it is
clear that $E(G_1)=P_I$ and therefore $G_1$ is acyclic. Henceforth, we
will assume that $P_I$ is not a total order.

Suppose $G_1$ is not acyclic. Let
$C=\{(a_0,a_1),(a_1,a_2),\ldots,(a_{t-2},a_{t-1}),(a_{t-1},a_1)\}$
be a shortest directed cycle in $G_1$.

First we will show that $t>2$ ($t$ is the length of $C$). If $t=2$, then
there should be $a,b\in V$ such that $(a,b),(b,a)\in E(G_1)$. Since $P$ is
a partial order, $(a,b)$ and $(b,a)$ cannot be simultaneously present in
$P$. The same holds for $P_I$. Thus, without loss of generality we can
assume that $(a,b)\in P$ and $(b,a)\in P_I$. But if $(a,b)\in P$, then, $a$
and $b$ are adjacent in $G_\poset$ and thus adjacent in $I$. Then clearly
the intervals of $a$ and $b$ intersect and therefore $(b,a)\notin P_I$, a
contradiction.

Now, we claim that two consecutive edges in $C$ cannot belong to $P$
(or $P_I$). Suppose there do exist such edges, say $(a_i,a_{i+1})$
and $(a_{i+1},a_{i+2})$ which belong to $P$ (or $P_I$) (note that the
addition is modulo $t$). Since $P$ (or $P_I$) is a partial order, it
implies that $(a_i,a_{i+2})\in P$ (or $P_I$) and as a result we have a
directed cycle of length $t-1$, a contradiction to the assumption that
$C$ is a shortest directed cycle. Therefore, the edges of $C$ alternate
between $P$ and $P_I$. It also follows that $t\ge 4$.

Without loss of generality we will assume that $(a_1,a_2),(a_3,a_4)\in
P_I$. We claim that $\{(a_1,a_2),(a_3,a_4)\}$ is an induced
poset of $P_I$. First of all $a_2$ and $a_3$ are not comparable
in $P_I$ as they are comparable in $P$. If either $\{a_1,a_3\}$
or $\{a_2,a_4\}$ are comparable, then we can demonstrate a shorter
directed cycle in $G_1$, a contradiction. Finally we consider the
pair $\{a_1,a_4\}$. If $t=4$, then they are not comparable as they are
comparable in $P$ while if $t\ne4$ and if they are comparable, then,
it would again imply a shorter directed cycle, a contradiction. Hence,
$\{(a_1,a_2),(a_3,a_4)\}$ is an induced subposet. In the literature
such a poset is denoted as $\textbf{2}+\textbf{2}$ where $+$ refers
to \emph{disjoint sum} and $\textbf{2}$ is a two-element total
order. Fishburn \cite{intransitiveIndifferenceFishburn} has proved
that a poset is an interval order if and only if it does not contain
a $\textbf{2}+\textbf{2}$. This implies that $P_I$ is not an interval
order, a contradiction.

We have therefore proved that there cannot be any directed cycles in
$G_1$. In a similar way we can show that $G_2$ is an acyclic directed
graph.  \qed
\end{proof}

Since $G_1$ and $G_2$ are acyclic, we can construct total orders, say
$L^1$ and $L^2$ using \emph{topological sort} on $G_1$ and $G_2$ such
that $P\cup P_I\subseteq L^1$ and $P\cup \oP_I\subseteq L^2$ (For more
details on topological sort, see \cite{introAlgoCormenEtal} for example).

For each $I_i$, we create linear extensions $L_i^1$ and $L_i^2$ as
described above. We claim that $\rlzr=\{L_i^j|i\in[k], j\in[2]\}$ is a
realizer of $\poset$. For each $L_i^j$, it is clear from construction
that $P\subset L_i^j$.  If $a$ and $b$ are not comparable in $P$, then
$\{a,b\}\notin E(G_\poset)$, and therefore there exists some interval
graph $I_q\in\igr$ such that $\{a,b\}\notin E(I_q)$. Assuming that
the interval for $a$ occurs before the interval for $b$ in the interval
representation of $I_q$, it follows by construction that $(a,b)\in P_{Iq}$
and $(b,a)\in \oP_{Iq}$ and therefore $(a,b)\in L^1_q$ and $(b,a)\in
L^2_q$. Hence proved.

\subsection{Tight Example for Theorem \ref{thm:dimUpperBound}}
Consider the \emph{crown} poset $S_n^0$: a height-$2$ poset with $n$
minimal elements $a_1,a_2,\ldots,a_{n}$ and $n$ maximal elements
$b_1,b_2,\ldots,b_{n}$ and $a_i<b_j$, for $j=i+1, i+2,\ldots,i-1$,
where the addition is modulo $n$. Its underlying comparability graph is
the bipartite graph obtained by removing a perfect matching from the
complete bipartite graph $K_{n,n}$. The dimension of this poset is $n$
(see \cite{dimensionCrownTrotter,partiallyOrderedSetsTrotter})
while the boxicity of the graph is $\ceil{\frac{n}{2}}$
\cite{cubicityBoxVCSunilAnitaChintan}.

\section{Proof of Theorem \ref{thm:lowerBoundPosetdim}}
We will prove that $\boxi(G_\poset)\le
(\chi(G_\poset)-1)\dim(\poset)$. Let $(\chi(G_\poset)-1)=p$,
$\dim(\poset)=k$ and $\rlzr=\{L_1,\ldots,L_k\}$ a realizer of
$\poset$. Now we color the vertices of $G_\poset$ as follows: For a
vertex $v$, if $\gamma$ is the length of a longest chain in $\poset$
such that $v$ is its maximum element, then we assign color $\gamma$
to it. This is clearly a proper coloring scheme since if two vertices
$x$ and $y$ are assigned the same color, say $\gamma$ and $x<y$, then
it implies that the length of a longest chain in which $y$ occurs
as the maximum element is at least $\gamma+1$, a contradiction. Also,
this is a minimum coloring because the maximum number that gets assigned
to any vertex equals the length of a longest chain in $\poset$, which
corresponds to the clique number of $G_\poset$.

Now we construct $pk$ interval graphs $\igr=\{I_{ij} | i\in[p],
j\in[k]\}$ and show that $G_\poset$ is an intersection graph of
these interval graphs.  Let $\Pi_j$ be the \emph{permutation
induced} by the total order $L_j$ on $[n]$, i.e. $xL_jy$ if
and only if $\Pi_j^{-1}(x)<\Pi_{j}^{-1}(y)$.  The following
construction applies to all graphs in $\igr$ except $I_{pk}$.
Let $I_{ij}\in\igr\setminus\{I_{pk}\}$. We assign the point interval
$\left[\Pi_j^{-1}(v),\Pi_j^{-1}(v)\right]$ for all vertices $v$
colored $i$.  For all vertices $v$ colored $i'<i$, we assign
$\left[\Pi_j^{-1}(v),n+1\right]$ and for those colored $i'>i$, we
assign $\left[0,\Pi_j^{-1}(v)\right]$. The interval assignment for
the last interval graph $I_{pk}$ is as follows: for all vertices
$v$ colored $p+1=\chi(G_\poset)$ we assign the point interval
$\left[\Pi_k^{-1}(v),\Pi_k^{-1}(v)\right]$ and for the rest of the
vertices we assign the interval $\left[\Pi_k^{-1}(v),n+1\right]$. Next,
we will show that $G_\poset=\bigcap_{I\in \igr}I$.
\begin{clm}
If $u$ and $v$ are adjacent in $G_\poset$, then they are adjacent in
all $I\in\igr$.
\end{clm}
\begin{proof}
Let $u$ be colored $i$ and $v$ be colored $i'$. It is clear that $i\ne
i'$ and without loss of generality we will assume that $i<i'$. By the
way we have colored, it implies that $u<v$ in $P$ and therefore
$\Pi_j^{-1}(u)<\Pi_j^{-1}(v)$, $\forall j\in[k]$. Let $I_{hj}$,
$h\in[p]$ and $j\in[k]$ be the interval graph under consideration.
There are 5 possible cases which we consider one by one:
\paragraph{Case 1: $(h<i,i')$} 
By construction in $I_{hj}$, $u$ and $v$ are assigned intervals
$\left[0,\Pi_j^{-1}(u)\right]$ and $\left[0,\Pi_j^{-1}(v)\right]$
respectively and therefore $u$ and $v$ are adjacent in $I_{hj}$, $\forall
j\in [k]$. 
\paragraph{Case 2: $(i,i'<h)$} 
$u$ and $v$ are assigned intervals
$\left[\Pi_j^{-1}(u),n+1\right]$ and $\left[\Pi_j^{-1}(v),n+1\right]$
respectively and therefore are adjacent in $I_{hj}$, $\forall j\in [k]$. 
\paragraph{Case 3: $(i<h<i')$} 
$u$ is assigned interval $\left[\Pi_j^{-1}(u),n+1\right]$ and $v$ is assigned
interval $\left[0,\Pi_j^{-1}(v)\right]$. Since $0<\Pi_j^{-1}(u)<
\Pi_j^{-1}(v)<n+1$, it follows that $u$ is adjacent to $v$ in $I_{hj}$,
$\forall j\in[k]$.
\paragraph{Case 4: $(h=i)$} 
If $h=p$ and $j=k$, then $i'=p+1$ and therefore $u$ is
assigned $\left[\Pi_k^{-1}(u),n+1\right]$ and $v$ is assigned
$\left[\Pi_k^{-1}(v),\Pi_k^{-1}(v)\right]$. If not, then $u$ is assigned
the point interval $\left[\Pi_j^{-1}(u),\Pi_j^{-1}(u)\right]$ and $v$
is assigned $\left[0,\Pi_j^{-1}(v)\right]$. In either case, since
$\Pi_j^{-1}(u)< \Pi_j^{-1}(v)$, the two vertices are adjacent.
\paragraph{Case 5: $(h=i')$} Since $h\le p=\chi(G_\poset)-1$,
it implies that $i,i'\le p$. Therefore, if $h=p$ and $j=k$, then
$u$ and $v$ are assigned $\left[\Pi_j^{-1}(u),n+1\right]$ and
$\left[\Pi_j^{-1}(v),n+1\right]$ respectively. If not, then $v$ is
assigned the point interval $\left[\Pi_j^{-1}(v),\Pi_j^{-1}(v)\right]$
and $u$ is assigned $\left[\Pi_j^{-1}(u),n+1\right]$. Again, since
$\Pi_j^{-1}(u)< \Pi_j^{-1}(v)$, in either case the two vertices are
adjacent. Hence proved.
\bqed
\end{proof}
\begin{clm}
If $u$ and $v$ are not adjacent in $G_\poset$, then there exists some
$I\in\igr$ such that $\{u,v\}\notin E(I)$.
\end{clm}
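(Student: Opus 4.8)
The plan is to show that whenever $u$ and $v$ are non-adjacent in $G_\poset$, equivalently incomparable in $\poset$, at least one graph of $\igr$ realizes them as disjoint intervals. The structural fact I would isolate first comes from the realizer property: since $u$ and $v$ are incomparable, neither $u<v$ nor $v<u$ holds in $P$, so neither inequality can hold in \emph{every} $L\in\rlzr$; as each $L$ is a total order, there is consequently a member of $\rlzr$ placing $u$ before $v$ and another placing $v$ before $u$. I am therefore free to select an order realizing whichever relative position of $u$ and $v$ is convenient. Writing $i$ for the color of $u$ and $i'$ for the color of $v$ (both in $[p+1]$, with $p=\chi(G_\poset)-1$), I would split into the case $i\ne i'$ and the case $i=i'$.

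For $i\ne i'$, assume without loss of generality that $i<i'$, and pick $j_0\in[k]$ with $v$ before $u$ in $L_{j_0}$, i.e. $\Pi_{j_0}^{-1}(v)<\Pi_{j_0}^{-1}(u)$. I would use the graph $I_{i,j_0}$, whose threshold color is the \emph{smaller} color $i$. If $(i,j_0)\ne(p,k)$ this is one of the generic graphs: $u$ (colored $i$) gets the point interval $[\Pi_{j_0}^{-1}(u),\Pi_{j_0}^{-1}(u)]$, while $v$ (colored $i'>i$) gets $[0,\Pi_{j_0}^{-1}(v)]$; as $\Pi_{j_0}^{-1}(v)<\Pi_{j_0}^{-1}(u)$ these intervals are disjoint. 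I would then dispose of the boundary case $(i,j_0)=(p,k)$, which forces $i'=p+1$ and sends us to the special graph $I_{pk}$: there $u$ (colored $p$) gets $[\Pi_k^{-1}(u),n+1]$ and $v$ (colored $p+1$) gets the point $[\Pi_k^{-1}(v),\Pi_k^{-1}(v)]$, and the same inequality $\Pi_k^{-1}(v)<\Pi_k^{-1}(u)$ keeps them disjoint. Thus the chosen reversal $L_{j_0}$ separates $u$ and $v$ whether or not the graph is the special one.

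For $i=i'$ the two vertices are automatically incomparable, and I would split on the color value. If $i\le p$, any generic graph $I_{ij}$ with threshold $i$ assigns both vertices point intervals at their distinct positions $\Pi_j^{-1}(u)\ne\Pi_j^{-1}(v)$, hence separates them; such a generic graph exists when $i<p$ (any $j$) or when $i=p$ and $k\ge2$ (take $j<k$), while the only remaining possibility, $i=p$ with $k=1$, cannot arise because $k=1$ makes $\poset$ a total order and $G_\poset$ complete, leaving no non-adjacent pair. If instead $i=i'=p+1$, I would use the special graph $I_{pk}$, where both vertices receive point intervals and are therefore disjoint; note a generic graph fails here, since in it both would be left rays $[0,\cdot]$ sharing the endpoint $0$.

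The routine part is the interval bookkeeping: two rays sharing $0$ or $n+1$ always intersect, whereas a point meets a ray exactly when it lies on the ray's side. The one genuinely delicate point, and the main obstacle, is the special graph $I_{pk}$, whose assignment rule differs from all the others: one must check separately that it still separates a color-$p$/color-$(p+1)$ pair, where the point/ray roles of $u$ and $v$ swap relative to the generic case yet the same positional inequality suffices, and that it is the unique graph able to separate two vertices both colored $p+1$. Together with Claim~1, this gives $G_\poset=\bigcap_{I\in\igr}I$, so Lemma~\ref{lem:intbox} yields $\boxi(G_\poset)\le|\igr|=pk=(\chi(G_\poset)-1)\dim(\poset)$, which is exactly Theorem~\ref{thm:lowerBoundPosetdim}.
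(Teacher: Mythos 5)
Your argument is correct and follows essentially the same route as the paper's proof: select a linear extension of the realizer that reverses the order of the incomparable pair, then check that the corresponding interval graph (generic $I_{ij}$, or the special $I_{pk}$ when the colors force it) separates the two intervals, handling the equal-color case via a generic graph for colors at most $p$ and via $I_{pk}$ for color $p+1$. Your case split on whether $(i,j_0)=(p,k)$ is just a reorganization of the paper's split on $i<p$ versus $i=p$, and your justification that $k\ge 2$ whenever a non-adjacent pair exists is the same observation the paper invokes with ``Recall that $k\ge2$.''
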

\begin{proof}
Again let $u$ be colored $i$ and $v$ be colored $i'$. Recall that
$k\ge2$. If $i=i'$, then by construction it is clear that $u$ and $v$ are
not adjacent in $I_{i1}$ if $i\ne p+1$ and when $i=p+1$, then they are not
adjacent in $I_{pk}$. Therefore, without loss of generality we will assume
that $i<i'$. Since $u$ and $v$ are not adjacent in $G_\poset$, they are
incomparable in $P$ and therefore, there exists some $l\in[k]$ such that
$u>v$ in $L_l$ which in turn implies that $\Pi_l^{-1}(u)>\Pi_l^{-1}(v)$.
There are 2 possible cases:
\paragraph{Case 1: $(i<p)$}
Since $i<i'$, in $I_{il}$, $u$ and $v$ are assigned
intervals $\left[\Pi_l^{-1}(u),\Pi_l^{-1}(u)\right]$ and
$\left[0,\Pi_l^{-1}(v)\right]$ respectively and therefore, since
$\Pi_l^{-1}(u)>\Pi_l^{-1}(v)$ $u$ and $v$ are not adjacent in $I_{il}$.
\paragraph{Case 2: $(i=p)$}
Clearly $i'=p+1$. If $l<k$, then it is similar to the
previous case. If $l=k$, then, in $I_{pk}$, $u$ and
$v$ are assigned $\left[\Pi_k^{-1}(u),n+1\right]$ and
$\left[\Pi_k^{-1}(v),\Pi_k^{-1}(v)\right]$ respectively. Since
$\Pi_l^{-1}(u)>\Pi_l^{-1}(v)$, $u$ and $v$ are not adjacent in $I_{pk}$.
\bqed
\end{proof}
Hence we have proved Theorem \ref{thm:lowerBoundPosetdim}. 

Consider a complete $k$-partite graph $G$ on $n=qk$ vertices where $q,k>1$,
i.e. $V(G)=V_1\uplus V_2\uplus\cdots\uplus V_k$ is a partition of $V(G)$
where $|V_i|=q$. For any two vertices $x\in V_i$ and $y\in V_j$, $\{x,y\}\in
E(G)$ if and only if $i\ne j$. $G$ is a comparability graph and here is
one transitive orientation of $G$: for every pair of adjacent vertices
$u\in V_i$ and $v\in V_j$, where $u,v\in[k]$ and $i\ne j$, make $u<v$
if and only if $i<j$. Let $\poset$ be the resulting poset. It is
an easy exercise to show that $\dim(\poset)=2$.  The chromatic number
of $G$ is $k$ and Roberts \cite{recentProgressesInCombRoberts} showed
that its boxicity is $k$. From Theorem \ref{thm:lowerBoundPosetdim}
it follows that $\dim(\poset)\ge\frac{k}{k-1}$. Therefore, the
complete $k$-partite graph serves as a tight example for Theorem
\ref{thm:lowerBoundPosetdim}. 

However, it would be interesting to see if
there are posets of higher dimension for which Theorem
\ref{thm:lowerBoundPosetdim} is tight.

\section{Boxicity of the extended double cover}
In this section, we will prove Lemma \ref{lem:boxCovGraph}. But first, we
will need some definitions and lemmas.
\begin{definition}
Let $H$ be an $AB$ bipartite graph. The associated co-bipartite graph
of $H$, denoted by $H^*$ is the graph obtained by making the sets $A$
and $B$ cliques, but keeping the set of edges between vertices of $A$
and $B$ identical to that of $H$, i.e. $\forall u\in A, v\in B$, $\{u,v\}\in
E(H^*)$ if and only if $\{u,v\}\in E(H)$. 
\end{definition}
The associated co-bipartite graph $H^*$ is not to be confused with the 
complement of $H$ (i.e. $\overline{H}$) which is also a co-bipartite graph.

\begin{definition}{(Canonical interval representation of a co-bipartite
interval graph:)} \label{def:canonCobipartite}
Let $I$ be an $AB$ co-bipartite interval graph. A canonical
interval representation of $I$ satisfies: $\forall u\in A$, $l(u)=l$
and $\forall u\in B$ $r(u)=r$, where the points $l$ and $r$ are the
leftmost and rightmost points respectively of the interval representation.
\end{definition}
We claim that such a representation exists for every $AB$ co-bipartite
interval graph. Note that if $I$ is a complete graph, the claim is
trivially true. Therefore we take $I$ to be non-complete.  Consider any
interval representation of $I$. Since $A$ is a clique there exists a
point, say $l$ which is contained in all intervals corresponding to
vertices in $A$. Similarly, let $r$ be a point in the intersection of
intervals corresponding to vertices of $B$. Since $I$ is non-complete,
it is clear that $l\ne r$. By definition of $l$ and $r$ we have
$l(u)\le l\le r(u),\ \ \forall u\in A$ and $l(u)\le r\le r(u)\ \
\forall u\in B$. Without loss of generality we can assume that $l<r$
and as a result $r(u)\ge l$ and $l(u)\le r$ for all vertices $u$. This
means no interval ends before the point $l$ and no interval starts after
the point $r$. Hence, it follows that for any interval containing $l$,
we can make $l$ its left end point and for an interval containing $r$,
we can make $r$ its right end point. Therefore, we have a canonical
interval representation of $I$.

The following lemma is easy to verify. 
\begin{lemma}\label{lem:intInt}
Consider two closed intervals on the real line with left end points $l_1$,
$l_2$ and right end points $r_1$, $r_2$. Then, the two intervals
intersect if and only if $l_1\le r_2$ and $l_2\le r_1$. In other words,
the two intervals do not intersect if and only if $r_1<l_2$ or $r_2<l_1$.
\end{lemma}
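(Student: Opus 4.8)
The plan is to prove the biconditional directly, treating this as an elementary statement about two closed intervals $[l_1,r_1]$ and $[l_2,r_2]$ on the real line, where I invoke the standing convention that $l_1\le r_1$ and $l_2\le r_2$ so that these are genuine (nonempty) intervals.

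For the forward direction, I would suppose the intervals intersect, so there is a common point $x$ satisfying $l_1\le x\le r_1$ and $l_2\le x\le r_2$. Chaining these inequalities across the two intervals immediately yields $l_1\le x\le r_2$ and $l_2\le x\le r_1$, hence $l_1\le r_2$ and $l_2\le r_1$, as required.

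For the converse, assuming $l_1\le r_2$ and $l_2\le r_1$, I would exhibit an explicit witness point lying in both intervals, namely $x=\max(l_1,l_2)$. By construction $x\ge l_1$ and $x\ge l_2$, so it only remains to check $x\le r_1$ and $x\le r_2$. I would split into the two cases according to which left endpoint attains the maximum: if $x=l_1$, then $x\le r_1$ holds because $l_1\le r_1$, while $x\le r_2$ holds by the hypothesis $l_1\le r_2$; the symmetric case $x=l_2$ uses $l_2\le r_2$ together with the hypothesis $l_2\le r_1$. In either case $x$ belongs to both intervals, so they intersect.

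Finally, the ``in other words'' reformulation is just the contrapositive of the biconditional just established: negating the condition ``$l_1\le r_2$ and $l_2\le r_1$'' gives ``$l_1>r_2$ or $l_2>r_1$'', i.e. ``$r_2<l_1$ or $r_1<l_2$'', which is precisely the stated criterion for non-intersection. Since every step is a one-line manipulation of inequalities, there is no genuine obstacle here; the only points requiring care are keeping the two symmetric subcases of the converse straight and making sure the interval conventions $l_i\le r_i$ are invoked exactly where needed.
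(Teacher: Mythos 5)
Your proof is correct and complete: the paper itself gives no argument for this lemma (it is merely stated as ``easy to verify''), and your verification --- chaining inequalities through a common point for the forward direction, exhibiting $\max(l_1,l_2)$ as an explicit witness for the converse, and taking the contrapositive for the reformulation --- is exactly the standard elementary argument one would supply. No issues.
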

\begin{lemma}\label{lem:bipcobip}
Let $H$ be an $AB$ bipartite graph and $H^*$ its associated co-bipartite
graph. If $H^*$ is a non-interval graph, then
\[
\frac{\boxi(H^*)}{2}\le \boxi(H)\le \boxi(H^*).
\]
If $H^*$ is an interval graph, then $\boxi(H)\le2$.
\end{lemma}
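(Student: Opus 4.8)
The plan is to exploit the fact that $H^*$ is exactly $H$ together with a clique on $A$ and a clique on $B$, so that in any representation of $H^*$ as an intersection of interval graphs, every one of those interval graphs is a supergraph of $H^*$ and hence itself an $AB$ co-bipartite interval graph. By the canonical interval representation (Definition \ref{def:canonCobipartite}), each such graph $I$ may be taken with every $u\in A$ assigned an interval $[l,r(u)]$ sharing the left endpoint $l$ and every $v\in B$ an interval $[l(v),r]$ sharing the right endpoint $r$, with $l<r$; adjacency across the partition is then governed purely by the threshold $l(v)\le r(u)$.

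For the upper bound $\boxi(H)\le\boxi(H^*)$, set $b=\boxi(H^*)\ge2$ and write $H^*=\bigcap_{i=1}^b I_i$ with canonical representations as above. I would keep $I_3,\dots,I_b$ unchanged and replace $I_1$ and $I_2$ by interval graphs $J_1,J_2$ that recover $H$ from $H^*$: in $J_1$ I shrink every $A$-interval to a distinct point $[r(u),r(u)]$ while leaving the $B$-intervals as $[l(v),r]$, which makes $A$ an independent set and keeps the cross-adjacency threshold intact; in $J_2$ I do the symmetric thing, shrinking the $B$-intervals to points and leaving the $A$-intervals as $[l,r(u)]$, making $B$ independent. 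Since every $J_i$ and every $I_i$ still realizes the same threshold on cross pairs, the intersection agrees with $H^*$ on all $A$--$B$ pairs, while $J_1$ kills every within-$A$ edge and $J_2$ kills every within-$B$ edge. Hence $\bigcap_{i}J_i=H$ with $b$ interval graphs.

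For the lower bound $\boxi(H^*)\le 2\boxi(H)$, set $b'=\boxi(H)$ and write $H=\bigcap_{i=1}^{b'}I'_i$ with interval representations. From each $I'_i$ I would build two canonical co-bipartite interval graphs: $J_i^L$ (assign $u\in A\mapsto[L,r_i(u)]$ and $v\in B\mapsto[l_i(v),R]$ for global extremes $L<R$) and $J_i^R$ (the mirror assignment). By Lemma \ref{lem:intInt}, a cross pair is non-adjacent in $J_i^L$ exactly when $u$ lies strictly to the left of $v$ in $I'_i$, and non-adjacent in $J_i^R$ exactly in the opposite situation, so together $J_i^L$ and $J_i^R$ record precisely the two ways a cross pair can fail to intersect in $I'_i$. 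All $2b'$ graphs are co-bipartite, so their intersection keeps $A$ and $B$ as cliques, and a cross pair survives the intersection iff it is an edge of every $I'_i$, i.e. iff it is an edge of $H$. This yields $H^*=\bigcap_i(J_i^L\cap J_i^R)$ and $\boxi(H^*)\le2b'$. Finally, when $H^*$ is an interval graph, the same construction as in the upper bound, applied to a single canonical representation of $H^*$, produces $J_1$ and $J_2$ with $H=J_1\cap J_2$, giving $\boxi(H)\le2$.

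I expect the only real subtlety to be in the upper bound, namely getting the count to be exactly $b$ rather than $b+1$. The point to stress is that destroying the clique on $A$ does not require treating within-$A$ pairs one at a time: a single interval graph in which $A$ is an independent set simultaneously removes all of them, and likewise for $B$. Because the hypothesis that $H^*$ is non-interval forces $b\ge2$, there are two interval graphs available to be repurposed as $J_1$ and $J_2$, and this is exactly what makes the factor-one bound (rather than a bound with an additive constant) go through. I would also double-check the degenerate boundaries -- that shrinking intervals to distinct points genuinely yields a distinguishing representation, and that clipping the canonical endpoints into $[l,r]$ does not alter any adjacency -- but these are routine.
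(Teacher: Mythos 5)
Your proof is correct and follows essentially the same route as the paper's: take canonical representations of the co-bipartite interval graphs and repurpose two of them (shrinking one side's intervals to distinct points) to break the cliques on $A$ and $B$ for $\boxi(H)\le\boxi(H^*)$, then split each interval graph of $H$ into two one-sided canonical co-bipartite interval graphs for $\boxi(H^*)\le 2\boxi(H)$, handling the interval case by duplicating the single representation. The only differences are cosmetic, such as which endpoint the collapsed intervals land on.
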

\begin{proof}
We first show that $\boxi(H)\le \boxi(H^*)$. Let $\boxi(H^*)=k\ge2$
and $H^*=I_1\cap I_2\cap\ldots\cap I_k$, where $I_i$ are interval
graphs.  Note that since $I_i$ is a supergraph of a co-bipartite graph,
it is a co-bipartite interval graph.  Let us consider a canonical
interval representation for each $I_i$ and further assume that the right
end points of all vertices in $A$ and left end points of all vertices in
$B$ are distinct. Let $I_1'$ be the interval graph obtained by making
$r(u,I_1')=l(u,I_1')=l(u,I_1)\ \ \forall u\in B$ and keeping the rest
of the intervals unchanged. Similarly, let $I_2'$ be the interval graph
obtained by making $l(u,I_2')=r(u,I_2')=r(u,I_2)\ \ \forall u\in A$. Due
to our assumption of distinct end points it is clear that $A$ and $B$
are independent sets in $I_1'$ and $I_2'$ respectively. Suppose $u\in A$
and $v\in B$. For $i\in[2]$:
\begin{eqnarray*}
\{u,v\}\in E(I_i') &&\Longleftrightarrow r(u,I_i')\ge l(v,I_i')\\
\textrm{(by construction of $I'$ from $I$)} &&\Longleftrightarrow
r(u,I_i)\ge l(v,I_i)\\ 
&&\Longleftrightarrow \{u,v\}\in E(I_i)
\end{eqnarray*}
From this, we immediately see that $H=I_1'\cap I_2'\cap I_3\cap\ldots\cap
I_k$. 

Now suppose $\boxi(H^*)=1$, i.e. $H^*$ is an interval graph. Then we set
$I_1=I_2=H^*$ and proceed as in the previous case. Hence, $\boxi(H)\le
2$. Note that this inequality is tight: take for example $H=C_4$,
the cycle of length 4. $H^*$ is $K_4$ and therefore an interval graph,
but $C_4$ is not.

Now we show that $\boxi(H^*)\le2\boxi(H)$. Let $\boxi(H)=l$ and $H=I_1\cap
I_2\cap\ldots\cap I_l$, where $I_i$ are interval graphs. For each $I_i$, we
create two interval graphs $I_{2i-1}'$ and $I_{2i}'$ as follows: Consider an
interval representation of $I_i$. Let $l_i=\min_{u\in V}l(u,I_i)$ and
$r_i=\max_{u\in V}r(u,I_i)$, the leftmost and rightmost points in the
interval representation respectively. $I'_{2i-1}$ and $I'_{2i}$ are defined
as follows:
\[
\begin{array}{ll}
l(u,I'_{2i-1}) = l_i \textrm{ and } r(u,I'_{2i-1}) = r(u,I_i), & \forall u\in A,\\
r(u,I'_{2i-1}) = r_i \textrm{ and } l(u,I'_{2i-1}) = l(u,I_i), & \forall u\in B,\\
l(u,I'_{2i}) = l_i \textrm{ and } r(u,I'_{2i}) = r(u,I_i), & \forall u\in B,\\
r(u,I'_{2i}) = r_i \textrm{ and } l(u,I'_{2i}) = l(u,I_i), & \forall u\in A.
\end{array}
\]
Now we show that $H^*=\bigcap_{i=1}^{2l}I_i'$. From the definitions it
is clear that in each $I_i'$, $A$ and $B$ are cliques-- for example,
the interval corresponding to every vertex in $A$ in $I'_{2i-1}$ contains
$l_i$. Therefore we will assume that $u\in A$ and $v\in B$.
\[
\begin{array}{ccl}
\{u,v\}\in E(H^*) &\Longrightarrow& \{u,v\}\in E(H) \\
&\Longrightarrow& \{u,v\}\in E(I_i),\ \ \forall i=1,2,\ldots,l \\
\textrm{(From Lemma \ref{lem:intInt})}&\Longrightarrow& l(u,I_i)\le
r(v,I_i) \textrm{ and } l(v,I_i)\le r(u,I_i) \\
\end{array}
\]
In $I'_{2i-1}$, $l(u,I'_{2i-1})=l_i\le r_i\le r(v,I'_{2i-1})$ and
$l(v,I'_{2i-1})=l(v,I_i)\le r(u,I_i)=r(u,I_{2i-1}')$ and in $I'_{2i}$,
$l(v,I'_{2i})=l_i\le r_i\le r(u,I'_{2i})$ and
$l(u,I'_{2i})=l(u,I_i)\le r(v,I_i)=r(v,I_{2i}')$.
Therefore $u$ and $v$ are adjacent in both $I'_{2i-1}$ and $I'_{2i}$. 
Now suppose
\begin{eqnarray*}
\{u,v\}\notin E(H^*)&&\Longrightarrow \{u,v\}\notin E(H)\\
&&\Longrightarrow \exists I_j \textrm{ such that } \{u,v\}\notin E(I_j)
\end{eqnarray*}
In the interval representation of $I_j$, if $r(u,I_j)<l(v,I_j)$, then,
by definition $r(u,I'_{2j-1})<l(v,I'_{2j-1})$ and hence, $\{u,v\}\notin
E(I_{2j-1}')$. If $r(v,I_j)<l(u,I_j)$, then, $r(v,I'_{2j})<l(u,I_{2j}')$
and therefore, $\{u,v\}\notin E(I_{2j}')$. Hence proved.
\qed
\end{proof}

\subsection{Proof of Lemma \ref{lem:boxCovGraph}}\label{sec:proofCover}
\paragraph{$\boxi(G_c) \le \boxi(G)+2$:} Let $\boxi(G)=k$ and $G=I_1\cap
I_2\cap\ldots\cap I_k$ where $I_i$s are interval graphs. For each $I_i$,
we construct interval graphs $I'_i$ with vertex set $V(G_c)$ as follows:
Consider an interval representation for $I_i$. For every vertex $u$ in
$I_i$, we assign the interval of $u$ to $u_A$ and $u_B$ in $I_i'$. Let
$I'_{k+1}$ and $I'_{k+2}$ be interval graphs where (1) all vertices in $A$
are adjacent to all the vertices in $B$ (2) In $I'_{k+1}$ $A$ induces
a clique and $B$ induces an independent set while in $I'_{k+2}$ it is
the other way round. Now we show that $G_c=I'_1\cap I'_2\cap\ldots\cap
I'_{k+2}$. It is very easy to see that $\{u_A,u_B\}\in E(I'_i)\ \
\forall i\in[k+2]$. Suppose $u$ and $v$ are distinct vertices in $G$.
\begin{eqnarray*}
\{u_A,v_B\}\in E(G_c) &&\Longrightarrow \{u,v\}\in E(G)\\
&&\Longrightarrow \{u,v\}\in E(I_i), i\in[k]\\
&&\Longrightarrow \{u_A,v_B\}\in E(I'_i), i\in[k].
\end{eqnarray*}
Also, by definition it is clear that $\{u_A,v_B\}$ is an edge in both
$I'_{k+1}$ and $I'_{k+2}$. Therefore, $I'_i$s are all supergraphs of $G_c$.
\begin{eqnarray*}
\{u_A,v_B\}\notin E(G_c) &&\Longrightarrow \{u,v\}\notin E(G)\\
&&\Longrightarrow \exists I_j, j\in[k]\textrm{ such that } \{u,v\}\notin
E(I_j)\\
&&\Longrightarrow \{u_A,v_B\}\notin E(I'_j).
\end{eqnarray*}
$A$ and $B$ induce independent sets in $I'_{k+2}$ and $I'_{k+1}$
respectively. Hence, $G_c=I'_1\cap I'_2\cap\cdots\cap I'_k\cap I'_{k+1}\cap
I'_{k+2}$ and therefore $\boxi(G_c)\le\boxi(G)+2$.

\paragraph{$\boxi(G) \le2\boxi(G_c)$:} We will assume without loss of
generality that $|V(G)|>1$. This implies $G_c$ is not a complete graph
and therefore $\boxi(G_c)>0$. Let us consider the associated co-bipartite
graph of $G_c$, i.e. $G_c^*$. We will show that $\boxi(G)\le \boxi(G_c^*)$
and the required result follows from Lemma \ref{lem:bipcobip}. Let
$\boxi(G_c^*)=p$ and $G_c^*=J_1\cap J_2\cap\ldots\cap J_p$ where $J_i$s
are interval graphs. Let us assume canonical interval representation for
each $J_i$ (recall Definition \ref{def:canonCobipartite}). Corresponding
to each $J_i$, we construct an interval graph $J'_i$ with vertex set
$V(G)$ as follows: The interval for any vertex $u$ is the intersection
of the intervals of $u_A$ and $u_B$, i.e.  $l(u,J'_i)=l(u_B,J_i)$ and
$r(u,J'_i)=r(u_A,J_i)$. Note that since $u_A$ and $u_B$ are adjacent in
$J_i$, their intersection is non-empty.

Now we show that $G=\bigcap_{i=1}^pJ'_i$. First we consider two adjacent
vertices $u$ and $v$.
\begin{eqnarray*}
\{u,v\}\in E(G) &&\Longrightarrow \{u_A,v_B\},\{u_B,v_A\}\in E(G_c^*)\\
&&\Longrightarrow \{u_A,v_B\},\{u_B,v_A\}\in E(J_i), \forall i\in[p]\\
\textrm{(From Lemma \ref{lem:intInt})}&&\Longrightarrow l(v_B,J_i)\le
r(u_A,J_i) \textrm{ and } l(u_B,J_i)\le r(v_A,J_i), \forall i\in[p]\\
\textrm{(By definition of $J_i'$)}&&\Longrightarrow l(v,J'_i)\le r(u,J'_i)
\textrm{ and } l(u,J'_i)\le r(v,J'_i), \forall i\in[p]\\ \textrm{(From
Lemma \ref{lem:intInt})}&&\Longrightarrow \{u,v\}\in E(J'_i), \forall
i\in[p]
\end{eqnarray*}
Therefore, each $J'_i$ is a supergraph of $G$. Now, suppose $u$ and $v$ are
not adjacent.
\begin{eqnarray*}
\{u,v\}\notin E(G) &&\Longrightarrow \{u_A,v_B\}\notin E(G_c^*)\\
&&\Longrightarrow \exists J_j \textrm{ such that } \{u_A,v_B\}\notin
E(J_j)\\
\textrm{(From Lemma \ref{lem:intInt})}&&\Longrightarrow r(u_A,J_j) <
l(v_B,J_j) \textrm{ or } r(u_B,J_j) <l(v_A,J_j)\\
(\textrm{Since $J_j$ has a canonical interval representation})&&
\Longrightarrow r(u_A,J_j) < l(v_B,J_j)\\
\textrm{(By definition of $J_j'$)}&&\Longrightarrow r(u,J'_j) < l(v,J'_j)\\
\textrm{(From Lemma \ref{lem:intInt})}&&\Longrightarrow \{u,v\}\notin E(J'_j)
\end{eqnarray*}
Hence, $G=J'_1\cap J'_2\cap\cdots\cap J'_p$ and from Lemma
\ref{lem:bipcobip} we have $\boxi(G)\le\boxi(G_c^*)\le2\boxi(G_c)$.

\bibliography{docsdb}
\end{document}